\newtheorem{theorem}{Theorem}[section]
\newtheorem{corollary}{Corollary}[theorem]
\newtheorem{definition}{Definition}[section]
\newtheorem{proposition}{Proposition}[section]
\DeclareMathOperator*{\argmin}{arg\,min}
\newcommand{\ignore}[1]{}
\begin{document}

\begin{frontmatter}
\title{
Random Sketching to Enhance the Numerical Stability of\\
Block Orthogonalization Algorithms for $s$-step GMRES}

\author[snl]{Ichitaro Yamazaki}
\author[snl]{Andrew J. Higgins}
\author[snl]{Erik G. Boman}
\author[temple]{Daniel B. Szyld}

\fntext[snl]{Sandia National Laboratories, Albuquerque, New Mexico, U.S.A}
\fntext[temple]{Temple University, Philadelphia, Pennsylvania, U.S.A}

\begin{abstract}
We integrate random sketching techniques into block orthogonalization schemes needed for $s$-step GMRES.
The resulting, one-stage and two-stage, block orthogonalization schemes generate the basis vectors whose overall orthogonality error is bounded by machine precision as long as each of the corresponding block vectors are numerically full rank.
We implement these randomized block orthogonalization schemes using standard distributed-memory linear algebra kernels for $s$-step GMRES available in the Trilinos software packages.
Our performance results on the Perlmutter supercomputer (with four NVIDIA A100 GPUs per node) demonstrate that these randomized techniques can enhance the numerical stability of the orthogonalization and overall solver, without a significant increase in the execution time. 
\end{abstract}
\end{frontmatter}

\section{Introduction}

Generalized Minimum Residual (GMRES)~\cite{Saad:1986} is a popular subspace projection method for iteratively solving a large linear system of equations as it computes the approximate solution that minimizes the residual norm in the generated Krylov subspace.
To compute the approxmate solution,
GMRES generates the orthonormal basis vectors of its projection subspace based on two main computational kernels: 1) Sparse-Matrix Vector multiply (SpMV), typically combined with a preconditioner, and 2) orthogonalization.

Though GMRES is a robust iterative method for solving general linear systems,
the performance of these two kernels can be limited by communication costs (e.g., the cost of moving data through the local memory hierarchy and between the MPI processes). For instance, on a distributed-memory computer, to orthogonalize a new basis vector at each iteration,
GMRES requires global reduces among all the MPI processes
and performs its local computation based on either BLAS-1 or BLAS-2 operations. Hence, though the breakdown of the iteration time depends on the target hardware architecture and problem properties (e.g., the sparsity structure of the matrix and the preconditioner being used), orthogonalization can become a significant part of the iteration time, especially when scalable implementations of SpMV and preconditioner are available.

To improve the performance of the orthogonalization and of GMRES,
communication-avoiding (CA) variants of GMRES~\cite{Carson:2015,Hoemmen:2010}, based on $s$-step methods~\cite{Sturler:1995,Joubert:1992}, have been proposed.
These variants generate a set of $s$ basis vectors at a time,
utilizing two computational kernels:
1) the Matrix Powers Kernel (MPK) to generate the set of $s$ Krylov vectors by applying SpMV and preconditioner $s$ times, followed by
2) the Block Orthogonalization Kernel that orthogonalizes the set of $s+1$ basis vectors at once.
This provides the potential to reduce the communication cost of generating the $s$ basis vectors by a factor of $s$ (requiring the global reduce only at every $s$ step and using BLAS-3 for performing most of the local computation).
This is a very attractive feature, especially on currently available GPU clusters, 
where communication can be significantly more expensive compared to computation.

Since the potential speedup from the block orthogonalization is limited by the small step size $s$ required to maintain the numerical stability of MPK (e.g., to keep the $s+1$ basis vectors numerically full rank), a two-stage variant of block orthogonalization was proposed~\cite{Yamazaki:2024}. In order to maintain the well-conditioning of the basis vectors, at every $s$ steps, the first stage of this orthogonalization scheme pre-processes the block of new $s+1$ basis, while the full orthogonalization is delayed until enough number of basis vectors, $\widehat{s}+1$, are generated to obtain the higher performance. This improves the performance of the block orthogonalization process while using the small step size $s$.


Block orthogonalization consists of 
1) \emph{inter} block orthogonalization
to orthogonalize a new block of vectors against the already-orthogonalized blocks of vectors
and 2) \emph{intra} block orthogonalization 
to orthogonalize among the vectors within the new block.
For the inter-block orthogonalization, Block Classical Gram-Schmidt with re-orthogonalization (BCGS twice, or BCGS2) obtains good performance on current hardware architectures because it is based entirely on BLAS-3.
For robustness and performance of the overall block orthogonalization, the critical component is the algorithm used for the first intra-block orthogonalization~\cite{Barlow:2024}. In this paper, we consider the use of CholQR~\cite{Stath:2002} twice (CholQR2) as our intra-block orthogonalization, which is based mainly on BLAS-3.
Unfortunately,
though the above combinations of the algorithms performs well on current hardware, $s$-step basis vectors can be ill-conditioned, and CholQR2 can fail
when the condition number of the block of $s+1$ vectors is greater than the reciprocal of the square-root of machine epsilon (see Section~\ref{sec:cholqr2}).

To enhance the numerical stability of the above block orthogonalization schemes and of the overall $s$-step GMRES solver, we integrate random sketching techniques. Theoretical studies of such randomized schemes for the intra-block orthogonalization have been established in two recent
papers~\cite{Balabanov:2022,Higgins:2025}. We extend these studies to develop randomized BCGS2 schemes that generate the blocks of basis vectors whose overall orthogonality errors are bounded by machine epsilon.
We have initially presented our preliminary results of the current paper at 
the SIAM Conference on Parallel Processing for Scientific Computing (SIAM PP), 2024~\cite{siampp:2024}.

Our main contributions are:
\begin{itemize}
\item We integrate random sketching techniques into BCGS2 such that overall orthogonalization error is on the order of machine precision in both one-stage and two-stage frameworks as long as each of the corresponding block of $s+1$ and $\widehat{s}+1$ vectors are numerically full-rank, respectively.

\item We present numerical results to demonstrate the improved numerical stability using random sketching techniques (to pre-process the basis vectors) compared to the state-of-the-art deterministic algorithms (BCGS2 with CholQR2).

\item We implement Gaussian and Count sketching, and its combination, Count-Gauss sketching~\cite{Woodruff:2014}, for the $s$-step GMRES in Trilinos~\cite{Trilinos:2005,trilinos-website}, which is a collection of
          open-source software packages for supporting
          large-scale scientific and engineering simulation codes. Trilinos software stack
          allows the solvers, like $s$-step GMRES, to be portable to different computer architectures,
          using a single code base. In particular, our implementation of the random-sketching is based solely on standard distributed-memory linear algebra kernels (GEMM and SpMM), which are readily available in vendor-optimized libraries.

\item We study the performance of the block orthogonalization and $s$-step GMRES on the Perlmutter supercomputer at National Energy Research Scientific Computing (NERSC) center. Our performance results on up to 64 NVIDIA A100 GPUs show that random sketching has virtually no overhead to enhance the numerical stability of the one-stage BCGS2. Although it has a higher overhead due to the larger sketch size required for the two-stage algorithm,
the overhead became less significant as we increased the number of MPI processes. For example, the overhead was about 1.49$\times$ on 1 node, while it was about 1.19$\times$ on 16 nodes.
\end{itemize}

Table~\ref{tab:notation} lists the notation used in this paper.
In addition, we use $Q_{\ell:t}$ to denote the blocks column vectors of $Q$ with the block column indexes $\ell$ to $t$, while $q_{k:s}$ is the set of vectors with the column indexes $k$ to $s$. 
We then use the bold small letter $\mathbf{v}_j$ to denote the sketched version of the block vector $V_j$, e.g., $\mathbf{v}_j = \Theta^T V_j$.
Finally,
$[Q,V]$ is the column concatenation of $Q$ and $V$. For our numerical analysis,
we use $c_k(\epsilon, n,s)$ to represent a scalar constant that is in the order of the machine epsilon $\epsilon$
but also depends on the matrix dimensions $n$ and $s$. 

\begin{table}
\centerline{\footnotesize
\begin{tabular}{l|l}
notation    & description\\
\hline
 $n$        & problem size\\
 $m$        & subspace dimension \\
 $s$        & step size (for the first stage)\\
 $\widehat{s}$   & second step size (for the second stage and $s \le \widehat{s} \le m$)\\
 $v_k^{(j)}$ & $k$th basis vector within the $j$-th $s$ basis vectors\\
$V_j$       & $j$th $s$-step basis vectors including the starting vector, i.e.,\\
            & a set of $s+1$ vectors generated by MPK \\
            & $V_j = [v_{s(j-1)+1},v_{s(j-1)+2},\dots,v_{sj+1}]$\\
            & and $V_0 = [v_0]$ to simplify the notation\\
$\underline{V}_j$
            & same as $V_j$ except excluding the last vector, \\
            & which is the first vector of $V_{j+1}$, i.e.,\\
            & a set of $s$ vectors $\underline{V}_j = [v_{s(j-1)+1},v_{s(j-1)+2},\dots,v_{sj}]$\\
$\widehat{V}_j$  & $V_j$ after the first inter-block orthogonalization\\
$\overline{Q}_j$ & $V_j$ after the pre-processing stage\\
$\widehat{Q}_j$  & $V_j$ after the first intra-block orthogonalization\\
$Q_j$           & orthogonal basis vectors of $V_j$\\
$\Theta$        & sketch matrix\\
$\mathbf{v}_j$ & sketched version of the block vector $V_j$ (i.e., $\mathbf{v}_j := \Theta^T V_j$)\\
$\epsilon$  & machine epsilon\\
 $c_k(\epsilon, n,s)$ & a scalar constant in the order of $\epsilon$
                        but also depends on $n$ and $s$\\

$\kappa(V_j)$ & $\ell_2$-norm condition number of $V_j$ \\
$\| \cdot \| $ & $\ell_2$-norm
\end{tabular}}
\caption{Notation used in the paper.} \label{tab:notation}
\end{table}

\section{Related Work and Our Motivations}
In recent years, random sketching has been used
to improve the performance of Krylov solvers~\cite{Balabanov:2021:block,Balabanov:2022,Balabanov:2022:GMRES,Balabanov:2023:krylov}.
In contrast to the previous works that have focused on ``pseudo-optimal'' GMRES,
generating the basis vectors that are orthonormal with respect to the sketched
inner-product,
we use the random sketching to generate the well-conditioned basis vectors, but then explicitly generate the $\ell_2$-orthonormal basis vectors.
\begin{itemize}
\item
One reason for this is that except for one special case (i.e., two-stage with $\widehat{s}=m$, where $m$ is the restart length), we sketch only a part of the Krylov subspace (e.g., each panel or big panel of $s+1$ or $\widehat{s}+1$ basis vectors, respectively), requiring the $\ell_2$-orthogonality to ensure the overall consistency of the basis vectors over the restart loop.
\item
In addition, though the sketched norm is expected to be close to the original norm,
they could deviate from the $\ell_2$-norm in practice~\cite{Balabanov:2023:krylov}.
Though generating the $\ell_2$-orthonormal basis vectors requires additional cost
(both in term of computation and communication),
we expect the convergence of our implementation of sketched $s$-step GMRES to be the same as the original $s$-step GMRES, which is useful in practice.
\end{itemize}

Communication-avoiding (CA) variants of the tall-skinny Householder QR algorithm have been proposed~\cite{Demmel:2012} and its superior performance over the standard algorithm has been demonstrated~\cite{Anderson:2011}. In this paper, we focus on the performance comparison of the randomized algorithm against CholQR-like algorithms. Although with a careful implementation, CA Householder may obtain the performance close to CholQR, we believe CholQR, which is mostly based on standard BLAS-3 operation, as the baseline performance is beneficial. In addition, for $s$-step GMRES, it is convenient to explicitly generate the orthogonal basis vectors, where the CA variants require non-negligible performance overhead. 

The GPU performance of random sketching has been previously studied~\cite{Mary:2015,Swirydowicz:2023}.
This paper differs from these previous works since we focus on
the algorithmic development of numerically stable block orthogonalization schemes. We then integrate the resulting randomized block orthogonalization schemes into $s$-step GMRES, and study their performance impact on a GPU cluster.
We focus on the practical implementations of random sketching, using readily-available standard linear algebra kernels, though specialized kernels may improve the performance. 

Some of the sparse sketching techniques have the potential to reduce the computational complexity of the orthogonalization process. Unfortunately, we have not seen this performance benefit in our performance experiments, using the standard linear algebra kernels, while it may be possible by developing specialized implementations of sketching.
Nervelessness, our main focus is to enhance the numerical stability of the solver by integrating the random sketching techniques. Specifically, we develop randomized block orthogonalization algorithms that obtain $O(\epsilon)$ orthogonality errors, given the corresponding block vectors are numerically full-rank.

Since random sketching enhances the stability, it may be possible to use a larger step size for some matrices. However, it is often not feasible to tune the step size for each problem on a specific hardware (the largest step size to maintain the stability of MPK). Hence, we focus on improving the stability of the solver with the current default setup of Trilinos (i.e., $s=5$) allowing us to solve the problem, where the original algorithm failed, and study the required performance overhead. However, we will also discuss the computational and communication complexities of each algorithm in Section~\ref{sec:complexity}.

There have been significant advances in the theoretical understanding of $s$-step Krylov methods~\cite{Carson:2015} including the recent arXiv papers~\cite{Burke:2025,Carson2:025}. We will leave the potential integration and application of the specific random sketching techniques studied in this paper to those recent works as potential future studies.

\section{Background}
\label{sec:theory}
\setcounter{theorem}{1}

By $\Theta \in \mathbb{R}^{n\times\widehat{m}}$, we denote a  \textit{random sketch matrix}, with
$\widehat{m} \ll n$. The general concept of ``random sketching" is to apply this random sketch matrix $\Theta$ (generated using specific probability distributions) to a large matrix $V$ to obtain a ``sketched" matrix $\mathbf{v} = \Theta^T V$ that greatly reduces the row dimension of $V$ while preserving its fundamental properties, such as its norm and singular values, as much as possible.
In particular, the sketch matrix $\Theta$ is typically chosen to be a \textit{subspace embedding}, or a linear map to a lower dimensional space that preserves the $\ell_2$-inner product of all vectors within the subspace up to a factor of $\sqrt{1 \pm \mu}$ for some $\mu \in (0,1)$ \cite{Nakatsukasa:2021,Sarlos:2006}. Such embeddings also preserve $\ell_2$-norms in a similar way \cite{Balabanov:2022}. 

\begin{definition}[$\mu$-subspace embedding] \label{def:subspaceEmbedding}
Given $\mu \in (0,1)$, the sketch matrix $\Theta \in \mathbb{R}^{n\times\widehat{m}}$ is a \emph{$\mu$-subspace embedding} for the subspace $\mathcal{V} \subset \mathbb{R}^n$ if $\forall x, y \in \mathcal{V}$, 
\begin{equation}
    | \langle x,y \rangle - \langle \Theta^Tx,\Theta^Ty \rangle | \leq \mu \|x\|_2\|y\|_2. \label{eq:innerProdEmbedding}
\end{equation}
\end{definition}

Equation \eqref{eq:innerProdEmbedding} provides a straightforward relation between the sketching matrix and the preservation of the $\ell_2$-norm.

\begin{corollary} \label{cor:embeddingNorm}
If the sketch matrix $\Theta \in \mathbb{R}^{n\times\widehat{m}}$ is a \emph{$\mu$-subspace embedding} for the subspace $\mathcal{V} \subset \mathbb{R}^n$, then $\forall x \in \mathcal{V}$, 
\begin{equation}
    \sqrt{1-\mu}~\|x\|_2 \leq \|\Theta^Tx\|_2 \leq \sqrt{1+\mu}~\|x\|_2. \label{eq:normEmbedding} 
\end{equation}
\end{corollary}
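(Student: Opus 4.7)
The plan is to derive the norm bound directly from the inner-product characterization in Definition~\ref{def:subspaceEmbedding} by specializing to the diagonal case $y = x$. Specifically, I would substitute $y = x$ into inequality \eqref{eq:innerProdEmbedding} and then use the identities $\langle x, x \rangle = \|x\|_2^2$ and $\langle \Theta^T x, \Theta^T x \rangle = \|\Theta^T x\|_2^2$. This immediately yields
\begin{equation*}
    \bigl|\|x\|_2^2 - \|\Theta^T x\|_2^2\bigr| \leq \varepsilon \|x\|_2^2.
\end{equation*}

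Next I would unfold the absolute value into the two-sided inequality
\begin{equation*}
    (1-\varepsilon)\|x\|_2^2 \leq \|\Theta^T x\|_2^2 \leq (1+\varepsilon)\|x\|_2^2,
\end{equation*}
and then take square roots of all three nonnegative quantities. The hypothesis $\varepsilon \in (0,1)$ guarantees that $1-\varepsilon > 0$, so the lower square root $\sqrt{1-\varepsilon}$ is real and the monotonicity of the square root on $[0,\infty)$ preserves the inequalities. This produces exactly \eqref{eq:normEmbedding}.

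There is essentially no obstacle here: the corollary is a one-line specialization of the defining inequality combined with the monotonicity of $\sqrt{\cdot}$. The only subtlety worth flagging is the need for $\varepsilon < 1$ in order to take the square root on the left-hand side, and this is precisely why the definition restricts $\varepsilon$ to $(0,1)$. No additional properties of $\Theta$ (such as its distribution, dimensions, or linearity beyond what is already used in forming $\Theta^T x$) are needed for this step.
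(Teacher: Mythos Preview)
Your argument is correct and is the standard one-line proof: specialize \eqref{eq:innerProdEmbedding} to $y=x$ and take square roots, using $\varepsilon\in(0,1)$ to keep the lower bound nonnegative. The paper does not actually spell out a proof of this corollary; it simply refers the reader to \cite{Balabanov:2022}, where the same elementary argument appears, so your proposal matches what the cited source does.
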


Corollary \ref{cor:embeddingNorm} implies that we can also bound the singular values of a matrix $V$ by those of the sketched matrix $\Theta^TV$. Hence if $\Theta^TV$ is well conditioned, then so is $V$.

\begin{corollary} \label{cor:embeddingSingularVals}
If the sketch matrix $\Theta \in \mathbb{R}^{n\times\widehat{m}}$ is a \emph{$\mu$-subspace embedding} for the subspace $\mathcal{V} \subset \mathbb{R}^n$, and $V$ is a matrix whose columns form a basis of $\mathcal{V}$, then 
    \begin{align}
        (1+\mu)^{-1/2}~\sigma_{min}(\Theta^TV) &\leq \sigma_{min}(V) \leq \sigma_{max}(V) \label{eq:singValEmbedding} \\
        &\leq (1-\mu)^{-1/2}~\sigma_{max}(\Theta^TV). \nonumber 
    \end{align}
    Thus,
    \begin{equation}
        \kappa(V) \leq \sqrt{\frac{1+\mu}{1-\mu}}~\kappa(\Theta^TV). \label{eq:condEmbedding}
    \end{equation}
\end{corollary}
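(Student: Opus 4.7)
The plan is to derive the singular value bounds in \eqref{eq:singValEmbedding} directly from the norm-preservation inequality of Corollary \ref{cor:embeddingNorm}, applied pointwise to vectors of the form $Vc$, and then combine the two resulting bounds to control $\kappa(V)$.

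First, I would invoke the variational (Courant--Fischer) characterization: if $V \in \mathbb{R}^{n\times k}$ has columns forming a basis of $\mathcal{V}$, then
$\sigma_{\max}(V) = \max_{\|c\|_2=1} \|Vc\|_2$ and $\sigma_{\min}(V) = \min_{\|c\|_2=1} \|Vc\|_2$,
with $c \in \mathbb{R}^k$, and the analogous identities hold with $\Theta^T V$ in place of $V$. Since the columns of $V$ span $\mathcal{V}$, every vector $Vc$ automatically lies in $\mathcal{V}$, so Corollary \ref{cor:embeddingNorm} applies and gives
\begin{equation*}
\sqrt{1-\varepsilon}\,\|Vc\|_2 \;\le\; \|\Theta^T V c\|_2 \;\le\; \sqrt{1+\varepsilon}\,\|Vc\|_2
\end{equation*}
for every $c$ of unit norm.

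Next, I would take the appropriate extremum over $c$ on each side. Maximizing the left inequality over unit $c$ yields $\sigma_{\max}(\Theta^T V) \ge \sqrt{1-\varepsilon}\,\sigma_{\max}(V)$, i.e., $\sigma_{\max}(V) \le (1-\varepsilon)^{-1/2}\sigma_{\max}(\Theta^T V)$. Minimizing the right inequality over unit $c$ yields $\sigma_{\min}(\Theta^T V) \le \sqrt{1+\varepsilon}\,\sigma_{\min}(V)$, i.e., $\sigma_{\min}(V) \ge (1+\varepsilon)^{-1/2}\sigma_{\min}(\Theta^T V)$. Together with the trivial bound $\sigma_{\min}(V) \le \sigma_{\max}(V)$, these two inequalities give exactly the chain in \eqref{eq:singValEmbedding}. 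Dividing them produces the condition number bound; I note that the correct prefactor appears to be $\sqrt{(1+\varepsilon)/(1-\varepsilon)}$ rather than $\sqrt{(1-\varepsilon)/(1+\varepsilon)}$ as written in \eqref{eq:condEmbedding}, since only the former exceeds $1$ and can serve as an upper bound in general.

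There is no real obstacle: the entire argument reduces to applying the norm embedding inequality to vectors of the form $Vc$, then choosing extrema consistent with the direction we wish to bound. The only mild subtlety is bookkeeping — matching the lower bound $\|\Theta^T Vc\|_2 \ge \sqrt{1-\varepsilon}\|Vc\|_2$ with a maximization (to bound $\sigma_{\max}(V)$ from above) and the upper bound with a minimization (to bound $\sigma_{\min}(V)$ from below), and recognizing that every $Vc$ sits in the embedded subspace so Corollary \ref{cor:embeddingNorm} is directly applicable.
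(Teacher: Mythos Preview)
Your argument is correct and is the standard route: apply Corollary~\ref{cor:embeddingNorm} to vectors $Vc$, then pass to extrema via the variational characterization of singular values. The paper does not supply its own proof here but defers to \cite{Balabanov:2022}, where the same norm-embedding argument is used; your observation that the prefactor in \eqref{eq:condEmbedding} should read $\sqrt{(1+\varepsilon)/(1-\varepsilon)}$ is also correct---as stated the factor is at most $1$ and cannot serve as an upper bound.
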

Proofs for Corollary \ref{cor:embeddingNorm} and \ref{cor:embeddingSingularVals} can be found in \cite{Balabanov:2022}. 

The limitation of $\mu$-subspace embedding presented in Definition~\ref{def:subspaceEmbedding} is that to ensure that the sketch matrix approximately preserves norms and inner products, one needs to know the subspace $\mathcal{V} \subset \mathbb{R}^n$ a priori. In contrast, to use sketching techniques in Krylov subspace methods efficiently, we need a sketch matrix that does not require complete prior knowledge of the subspace, since Krylov subspaces are generated as the algorithm iterates. This can be accomplished by using \emph{$(\mu, \delta, \widehat{s})$ oblivious $\ell_2$-subspace embeddings}~\cite{Balabanov:2022}.

\begin{definition}[$(\mu, \delta, \widehat{s})$ oblivious $\ell_2$-subspace embedding] \label{def:obliviousSubspaceEmbedding}
The sketch matrix $\Theta \in \mathbb{R}^{n\times\widehat{m}}$ is a \emph{$(\mu, \delta, \widehat{s})$ oblivious $\ell_2$-subspace embedding} if it is a $\mu$-subspace embedding for any fixed $\widehat{s}$-dimensional subspace $\mathcal{V} \subset \mathbb{R}^n$ with probability at least $1-\delta$.
\end{definition}

One concrete example of a $(\mu, \delta, \widehat{s})$ oblivious $\ell_2$-subspace embedding is $\Theta = \frac{1}{\sqrt{\widehat{m}}}G$ where $G \in \mathbb{R}^{n\times\widehat{m}}$ is a Gaussian matrix and the sketch size is given by $\widehat{m} = \Omega(\mu^{-2}\widehat{s})$~\cite{Martinsson:2020}. In practice, the sketch size may be chosen as $\widehat{m} \approx \widehat{s}/\mu^2$ ~\cite{Nakatsukasa:2021}. This relation allows a simple correspondence between the sketch size (or equivalenty the embedding dimension) $\widehat{m}$ and the subspace dimension $\widehat{s}$ for a given $\mu$. For instance, to achieve $\mu = 1/\sqrt{2}$, the sketch size of $\widehat{m} \approx 2\widehat{s}$ is sufficient,
though in principle, one could choose a different value of $\mu$ to construct a different sketch size. Other $(\mu, \delta, \widehat{s})$ oblivious $\ell_2$-subspace embeddings exist that can be stored in a sparse format, including sub-sampled randomized Hadamard and Fourier transforms (SRHT and SRFT, respectively), and ``sparse dimension reduction maps'' \cite{Balabanov:2022, Nakatsukasa:2021}.

\section{Block Orthogonalization for $s$-step GMRES}
\label{sec:sstep}

\begin{figure}[t]
\begin{center}
  \centerline{\fbox{\begin{minipage}[h!]{.9\linewidth}
    \footnotesize
    \input{codes/sstep-gmres}
  \end{minipage}}}
\end{center}
  \caption{
           Pseudocode of $s$-step GMRES where $[Q_j,R_j] = \mbox{qr}(Q,V_j)$ extends the QR factorization such that $Q R = V$ with $Q^TQ = I$ and upper-triangular $R$ with non-negative diagonals}
           \label{algo:sstep}
\end{figure}

Figure~\ref{algo:sstep} shows the pseudocode of $s$-step GMRES
for solving a linear system $Ax=b$ with a preconditioner $M^{-1}$,
which has been also implemented in Trilinos software framework \cite{Trilinos:2005,trilinos-website}. 
Though we focus on monomial basis vectors in this paper,
Trilinos also has an option to generate
Newton basis~\cite{Bai:1994} to improve the numerical stability of the basis vectors $V_j$
generated by the ``matrix-powers kernel'' (MPK).

Compared to the standard GMRES, $s$-step GMRES has the potential to reduce the communication cost 
of generating the $s$ basis vectors by a factor of $s$,
where standard GMRES is essentially $s$-step GMRES with the step size of one.
For instance, to apply SpMV $s$ times (Lines 7 to 9 of the pseudocode),
several CA variants of MPK exist~\cite{Mohiyuddin:2009}.
On a distributed-memory computer, 
CA variants may reduce the communication latency cost of SpMV, 
associated with the point-to-point neighborhood Halo exchange of the input vector,
by a factor of~$s$ (though it requires additional memory and local computation,
and it may also increase the total communication volume).
However, while in practice, SpMV is typically combined with a preconditioner to accelerate the convergence rate of GMRES, only a few CA preconditioners of specific types have been proposed \cite{Grigori:2015,Yamazaki:2014}. 
To support a wide-range of preconditioners used by applications, instead of CA MPK,
Trilinos $s$-step GMRES uses a standard MPK (applying each SpMV with neighborhood communication in sequence),
and focuses on improving the performance of block orthogonalization.
Also, avoiding the global communication may lead to a greater gain on the orthogonalization performance than CA MPK does on SpMV performance. 

In this paper, to maitain the stability of $s$-step GMRES, we focus on block orthogonalization schemes that can maintain the overall $\mathcal{O}(\epsilon)$ orthogonality error of the generated orthonormal block basis vectors $Q_{1:j}$, where $\epsilon$ is the machine precision:
\begin{eqnarray}\label{eq:eps}
  \|I - Q_{1:j}^T Q_{1:j}\| & = & \mathcal{O}(\epsilon).
\end{eqnarray}

The block orthogonalization algorithm consists of two steps:
the inter- and intra-block orthogonalization to orthogonalize the new set of $s+1$ basis vectors
against the previous vectors and among themselves, respectively.
To maintain orthogonality, in practice, both steps are applied
with re-orthogonalization. 

There are several combinations of the inter- and intra-block orthogonalization algorithms~\cite{Carson:2021}, but in this paper, we focus on 
the block-orthogonalization process that uses
Block Classical Gram-Schmidt (BCGS)
both for the first inter-block orthogonalization and for the re-orthogonalization,
and uses Cholesky QR (CholQR) factorization~\cite{Stath:2002}  for the intra-block \linebreak re-orthogonalization.
To ensure the stability and performance of the overall block orthgonalization, the remaining critical component is
the first intra-block orthogonalization scheme~\cite{Barlow:2024}, which is the focus of this paper.
Beside this first intra-block orthogonalization, such a block orthogonalization can be implemented using mostly BLAS-3 operations and needs only three global reduces. As a result, it performs well on current hardware architectures.
The pseudocode of this block orthogonalization process is shown in Figure \ref{algo:bcgs2}.

\begin{figure}[t]
\begin{center}
  \begin{subfigure}[b]{.9\linewidth}
  \centerline{\fbox{\begin{minipage}[h!]{\textwidth}
    \footnotesize
    \input{codes/bcgs}
  \end{minipage}}}
  \caption{BCGS Inter-block orthogonalization.} \label{algo:bcgs}
  \end{subfigure}
  \begin{subfigure}[b]{.9\linewidth}
  \centerline{\fbox{\begin{minipage}[h!]{\textwidth}
    \footnotesize
    \input{codes/cholQR}
  \end{minipage}}}
  \caption{CholQR Intra-block orthogonalization.} \label{algo:cholqr}
  \end{subfigure}
  \begin{subfigure}[b]{.9\linewidth}
  \centerline{\fbox{\begin{minipage}[h!]{\textwidth}
    \footnotesize
    \input{codes/bcgs2}
  \end{minipage}}}
  \caption{BCGS twice (BCGS2).} \label{algo:bcgs2}
  \end{subfigure}
  \caption{Block Classical Gram-Schmidt (BCGS) to orthogonalize $V_j$ against
           the orthonormal vectors $Q_{1:j-1}$, where
           ``chol($G$)'' returns the upper-triangular Cholesky factor of $G$.}
\end{center}
\end{figure}

In~\cite{Barlow:2024}, it has been shown that
in order to ensure the $\mathcal{O}(\epsilon)$ orthogonality error of all the basis vectors $Q_{1:j}$, the first intra-orthogonalization algorithm (Line 3 of Figure \ref{algo:bcgs2}) needs to generate $\widehat{Q}_j$ such that the backward and orthogonality errors satisfy
\begin{eqnarray}\label{eq:intra-blk-0}
  \|\widehat{V}_j - \widehat{Q}_j \widehat{R}_{j,j}\|~/~\| \widehat{V}_j \| & = & \mathcal{O}(\epsilon)
\end{eqnarray}
and
\begin{eqnarray}\label{eq:intra-blk}
  \|I - \widehat{Q}_j^T \widehat{Q}_j\| & = & \mathcal{O}(\epsilon).
\end{eqnarray}
In the next section, we explore two algorithms, which performs well on the current hardware architectures and achieves both \eqref{eq:intra-blk-0} and \eqref{eq:intra-blk}, for the first intra-block orthogonalization. 
Although we can show that the condition \eqref{eq:intra-blk-0} holds for the algorithms,
we focus on discussing the condition on each block vector (i.e., the required condition number $\kappa(\widehat{V}_j)$ of the input basis vectors $\widehat{V}_j$)
that is sufficient in order for each algorithm to guarantee~\eqref{eq:intra-blk} on the orthogonality error.

%

\ignore{
\begin{figure}[t]
\begin{center}
  \begin{subfigure}[b]{.9\linewidth}
  \centerline{\fbox{\begin{minipage}[h!]{\textwidth}
    \footnotesize
    \input{codes/BCGS-pip}
  \end{minipage}}}
  \caption{BCGS with Pythagorean Inner Product (BCGS-PIP). } \label{algo:bcgs-pip}
  \end{subfigure} 
  \begin{subfigure}[b]{\linewidth}
  \centerline{\fbox{\begin{minipage}[h!]{.9\linewidth}
    \footnotesize
    \input{codes/BCGS2-pip}
  \end{minipage}}}
  \caption{BCGS-PIP twice (BCGS-PIP2).}
  \end{subfigure} 
\end{center}
  \caption{ BCGS with Pythagorean Inner Product to generate a new set of orthonormal basis vectors $Q_j$. } \label{algo:bcgs2-pip}
\end{figure}
}

\section{Intra-Block Orthogonalization Algorithms}
\label{sec:one-stage}

\subsection{CholQR twice (CholQR2)}\label{sec:cholqr2}

For the first intra-block orthogonalization,
we first explore the use of CholQR~\cite{Stath:2002} twice (CholQR2). As we can see in Figure~\ref{algo:cholqr},
CholQR can be implemented mostly based on BLAS-3 and requires only one synchronization.

In~\cite[Theorem IV.1]{Yamazaki:2024,Stath:2002,CholeskyQR2ErrAnalysis},
it has been shown that the orthogonality error of $\widehat{Q}_j$ generated by CholQR in Figure~\ref{algo:cholqr} is bounded as 
\begin{equation}\label{eq:cholqr}
  \|I-\widehat{Q}_j^T\widehat{Q}_j\| = \mathcal{O}(\epsilon)\kappa(\widehat{V}_{j})^2,
\end{equation}
when the following condition is satisfied:
\begin{equation}\label{eq:assumption-1}
  c_1(\epsilon, n,s)\kappa(\widehat{V}_j)^2 < 1/2,
\end{equation}
where $c_1(\epsilon, n,s)$ is a constant.
Hence, BCGS2 with CholQR2 obtains the overall $\mathcal{O}(\epsilon)$ orthogonality error as formalized in the following proposition.
\begin{proposition} \label{prop1}
When the condition~\eqref{eq:assumption-1} is satisfied,
the orthgonality error 
of the basis vectors~$\widehat{Q}_j$ computed by the CholQR2-based first intra-block orthogonalization,
and hence of the basis vectors $Q_{1:j}$ generated by BCGS2 with CholQR2,
is of the order of the machine precision 
(as the condition \eqref{eq:intra-blk} ensures the condition \eqref{eq:eps}).
\end{proposition}
\begin{proof}
The proof of Proposition \ref{prop1} follows by combining the results from two facts: first, BCGS2 attains $O(\epsilon)$ orthogonality error overall provided each ``IntraBlk" step on Line 3 of Figure~\ref{algo:bcgs2} satisfies \eqref{eq:intra-blk-0} and \eqref{eq:intra-blk} \cite{Barlow:2024}, and CholQR2 satisfies these conditions when \eqref{eq:assumption-1} is satisfied \cite[Theorems 3.3 \& 3.5]{Fukaya:2015}.
\end{proof}

The main drawback of CholQR is that it
computes the Gram matrix of the input basis vectors $\widehat{V}_{j}$ to be orthogonalized (Line~2 in Figure~\ref{algo:cholqr}),
and the Gram matrix has the condition number which is the square of the condition number of the input vectors $\widehat{V}_j$. 
Hence, CholQR can fail when
the condition number of the input vectors $\widehat{V}_{j}$ is greater than
the reciprocal of the square-root of the machine epsilon $\epsilon$
(i.e., $\kappa(\widehat{V}_{j}) > 1/\mathcal{O}(\epsilon^{1/2})$). 
This can cause numerical issues, especially for the $s$-step method, because
even when Newton or Chebyshev basis is generated,
the $s$-step basis vectors can be ill-conditioned with a large condition number.

\begin{figure}
\begin{center}
  \centerline{\fbox{\begin{minipage}[h!]{.9\linewidth}
    \footnotesize
    \input{codes/cholQR_recursive}
  \end{minipage}}}
 \end{center}
  \caption{Recursive Cholesky QR (CholQR) to orthonormalize a set of vectors $V \in \mathbb{R}^{n\times s}$, where ``$\mbox{chol}(G)$ returns the upper-triangular Cholesky factor of the Gram matrix $G$.}\label{algo:cholqr-recursive}
\end{figure}

To alleviate this potential numerical instability,
Trilinos implements a ``recursive'' variant of CholQR, as shown in Figure~\ref{algo:cholqr-recursive};
when Cholesky factorization of the Gram matrix fails
at the $k$th step due to a non-positive diagonal,
it orthogonalizes just the first $k-1$ vectors by CholQR.
It then orthogonalizes the remaining vectors against the first $k-1$ (roughly) orthonormal vectors by BCGS
and recursively calls CholQR on the remaining vectors.
This avoids the algorithmic breakdown of the orthogonalization process
by adaptively adjusting the block size to orthogonalize the $s+1$ basis vectors.
To orthogonalize the remaining vectors against the first $k-1$ vectors,
it uses the partial Cholesky factors, and hence no additional overhead is needed.
However, it needs to re-compute the dot-products of the remaining vectors,
and may require multiple global reduces for ill-conditioned basis vectors. Furthermore, though it is often effective in recovering from the failures in combination with MPK, there is no bound on the orthogonality error with this recursive variant.

\subsection{Randomized-Householder CholQR (RandCholQR)}
\label{sec:randCholQR}

Since MPK can generate ill-conditioned basis vectors, the requirement~\eqref{eq:assumption-1} can be too restrictive. To enhance the numerical robustness, we integrate the random-sketching techniques.

\begin{figure}[t]
\begin{center}
  \begin{subfigure}[b]{.9\linewidth}
  \centerline{\fbox{\begin{minipage}[h!]{.9\linewidth}
    \footnotesize
    \input{codes/randQR_nochol}
  \end{minipage}}}
  \caption{Randomized Householder QR (RandHH) } \label{algo:randQR_nochol}  
  \end{subfigure}
\end{center}

\begin{center}
  \begin{subfigure}[b]{.9\linewidth}
  \centerline{\fbox{\begin{minipage}[h!]{.9\linewidth}
    \footnotesize
    \input{codes/randQR}
  \end{minipage}}}
  \caption{Randomized Householder CholQR (RandCholQR)} \label{algo:randQR}
  \end{subfigure}
\end{center}
  \caption{
  Randomized QR algorithm, where $\mbox{HH}(V)$ returns the orthogonal basis vectors $Q$ and the upper-triangular matrix $R$ based on the Householder QR algorithm such that $V=QR$.} 
\end{figure}


Instead of forming the Gram matrix of the basis vectors, which is the main cause of the numerical instability, Randomized CholQR (RandCholQR) first computes the random sketch $\widehat{\mathbf{v}}_j$ of the basis vectors~$\widehat{V}_j$. It then generates their well-conditioned basis vectors~$\overline{Q}_j$ using the upper-triangular matrix computed by the stable QR factorization of the sketched vectors $\widehat{\mathbf{v}}_j$. When the input vectors~$\widehat{V}_j$ are numerically full-rank
and the sketch matrix $\Theta$ is a $(\mu, \delta, s)$-oblivious $\ell_2$-subspace embedding (see Definition~\ref{def:obliviousSubspaceEmbedding}), it can be shown that the generated basis vectors $\overline{Q}_j$ has the condition number of $\mathcal{O}(1)$~\cite{Balabanov:2021:block,Higgins:2025}. Hence, according to~\eqref{eq:cholqr}, as we call CholQR on $\overline{Q}_j$, the resulting vector $\widehat{Q}_j$ has an $\mathcal{O}(\epsilon)$ orthogonality error.  Figure~\ref{algo:randQR} shows the pseudocode of the resulting RandCholQR algorithm for the intra-block orthogonalization.

The sketching typically requires one global all-reduce, and as a result, we expect RandCholQR to perform similarly to CholQR2.
The actual performance of the algorithm depends on the type of the sketching being used. In this paper, we look at the following random-sketching techniques that can be implemented using standard linear algebra kernels. Regardless of the types of the sketching used, the last step of the randCholQR, to generate the well-conditioned basis vectors through forward substitutions, requires $\mathcal{O}(ns^2)$ computation (Line 6 of Figure~\ref{algo:randQR_nochol}). Hence, in the discussion below, we focus on the first two steps of the algorithm (Lines 2 and 4).
\begin{itemize}
\item Gaussian-Sketching can be implemented using a dense GEneral Matrix-Matrix multiply (GEMM) to compute $\widehat{\mathbf{v}}_{j}$ on Line 2 of Figure~\ref{algo:randQR_nochol}. The nice feature of this approach is that it requires the sketch size of only $\mathcal{O}(s)$ \cite{Martinsson:2020}. However, this is a ``dense'' sketch, and the dense sketch matrix $\Theta$ needs to be explicitly stored to call GEMM. Hence, the Gaussia Sketch has the storage overhead of $\mathcal{O}(ns)$ and the computational complexity of $\mathcal{O}(ns^2)$ to generate the sketch $\widehat{\mathbf{v}}_j$ (Line 2), and its overall computational complexity is $\mathcal{O}(ns^2 + s^3)$, where $\mathcal{O}(s^3)$ is for computing HH of $\widehat{\mathbf{v}}_j$ (Line 4).

Though this complexity is the same as CholQR, the complexity of RandHH with the dense Gaussian-Sketch has a larger constant associated with the sketch size (i.e., $ns^2$ flops for CholQR to compute the Gram matrix, compared to $4ns^2$ floating-point operations (flops) for RandHH to generate the sketched vectors with the sketch size of $2s$).

The terms associated with $s^2$ and $s^3$ in the complexity $\mathcal{O}(ns^2 + s^3)$ could become significant, especially when we need to sketch a large number of basis vectors (e.g., for the two-stage approach discussed in Section~\ref{sec:two-stage}).

\item Count-Sketching can be implemented using Sparse-Matrix Matrix (dense vectors) multiply (SpMM) with a sparse sketch matrix $\Theta$ having one nonzero entry in each row  (with numerical values of either $1$ or $-1$). This is a ``sparse'' sketching, and compared to Gaussian-Sketching, it has a lower storage cost of $\mathcal{O}(n)$ and a lower computational complexity of $\mathcal{O}(ns)$. 

One drawback, however, is that it requires the larger sketch size of $\mathcal{O}(s^2)$ \cite{Clarkson:2013}. This could be a significant performance overhead, or a sequential performance bottleneck, when we need to sketch a large number of basis vectors,~$s$. In particular, on a distributed-memory computer, the basis vectors $\widehat{V}_j$ are distributed among the MPI processes in a 1D block row format (see Section~\ref{sec:imple} for more detailed discussion about our implementation).
Hence to form sketched vectors~$\widehat{\mathbf{v}}_j$, it requires the global all-reduce of $\mathcal{O}(s^2)$-by-$s$ dense sketched vectors, i.e., 
\[
  \widehat{\mathbf{v}}_j := \sum_{p=1}^{n_p} (\Theta^{(p)})^T \widehat{V}_j^{(p)},
\] 
where $\Theta^{(p)}$ and $\widehat{V}_j^{(p)}$ are the parts of $\Theta$ and $\widehat{V}_j$, which is distributed on the $p$-th process, respectively, and $\sum_{p=1}^{n_p}$ is the global all-reduce. Though both Gaussian and Count sketching requires all-reduce, the communication volume $\mathcal{O}(s^3)$, and hence the time, needed for the all-reduce with Count sketching can become significantly more, especially for a large $s$, compared to $\mathcal{O}(s^2)$ for Gaussian sketching (e.g., Figure~\ref{fig:distributed-intra-time}, and for the two-stage algorithm).

In addition, the Householder QR factorization of the sketched vectors is performed redundantly on a CPU by each MPI process (Line 4 of Figure~\ref{algo:randQR_nochol}), leading to a potential parallel performance bottleneck (i.e., the $\mathcal{O}(s^4)$ complexity for the local computation with the Count-Sketch, compared to the $\mathcal{O}(s^3)$ complexity with the Gaussian-Sketch).

The overall computational complexity of the Count Sketch is $\mathcal{O}(ns + s^4)$, compared to $\mathcal{O}(ns^2 + s^3)$ of the Gaussian sketch. 

\item To combine the advantage of the above two sketching approaches, Count-Gaussian Sketching uses a $\mathcal{O}(s^2) \times n$ Count-Sketch followed by a $\mathcal{O}(s) \times \mathcal{O}(s^2)$ Gaussian Sketch. Hence, most of the computational and storage costs are due to Count-Sketch, and then the Gaussian Sketch is locally applied such that the size of the final sketched vector is $\mathcal{O}(s)$.

Compared to the Count-Sketch, the size of the global all-reduce is reduced from  $\mathcal{O}(s^2)$-by-$s$ to $\mathcal{O}(s)$-by-$s$, i.e., 
\[
  \widehat{\mathbf{v}}_j := \sum_{p=1}^{n_p} \Theta_g^T((\Theta_c^{(p)})^T \widehat{V}_j^{(p)}),
\]
where $\Theta_c$ and $\Theta_g$ are the Count and Gaussian sketch matrices, respectively. The communication volume is reduced from the Count-Sketch because the Gaussian sketch is applied locally before the global all-reduce. Hence the communication volume for the all-reduce is $\mathcal{O}(s^2)$, and is the same as the Gaussian-Sketch and is reduced from $\mathcal{O}(s^3)$ needed for Count-Sketch.

Though it now requires only $\mathcal{O}(s^3)$ computation to compute the QR factorization of the final sketch,
the Gaussian sketch is applied redundantly on each MPI process, which has the computational complexity of $\mathcal{O}(s^4)$ and is the same as that needed for the Count-Sketch. Nevertheless, the local GEMM for the Count-Gauss may perform more efficiently than the local HH for Count-Sketching. Moreover, our implementation gathers the results $\widehat{\mathbf{v}}_j$ of the all-reduce on CPU, and GEMM before the global-reduce is computed on a GPU, while HH after the global reduce is computed on CPU.

Compared to the Gaussian-Sketch (or to the generation of the Gram matrix for CholQR), the Count-Gaussian reduces the computation complexity to apply the sketch from $\mathcal{O}(ns^2)$ to $\mathcal{O}(ns)$. Moreover, even if the Count-Gaussian did not lead to a performance improvement over the Gaussian-Sketch in practice
(due to a more efficient dense matrix-matrix multiply implementation, compared to a sparse-matrix matrix multiply),
the storage requirement is reduced from $\mathcal{O}(ns)$ to $\mathcal{O}(n)$.

The overall computational complexity of the Count-Gaussian is $\mathcal{O}(ns + s^4)$.
\end{itemize}

\begin{proposition}\label{prop2}
When RandCholQR is used as the first intra-block orthogonalization, the orthogonality error of the resulting basis vectors $\widehat{Q}_{j}$ is  of the order of the machine precision when $\Theta^T$ forms an $\mu$-subspace embedding over the range of $\widehat{V}_{j}$, and the following condition is satisfied:
\begin{equation}\label{eq:assumption-2}
  c_2(\epsilon, n,s)\kappa(\widehat{V}_j) < 1/2,
\end{equation}
where $c_2(\epsilon, n,s)$ is a constant,
or equivalently if MPK generates numerically full-rank basis vectors $\widehat{V}_j$.
\end{proposition}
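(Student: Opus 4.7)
The plan is to mirror the two-ingredient structure used for Proposition \ref{prop1}: reduce the full BCGS2 analysis to a statement about the first intra-block step via the result of \cite{Barlow:2024}, and then prove that RandCholQR achieves the $\mathcal{O}(\epsilon)$ intra-block orthogonality bound \eqref{eq:intra-blk} under the linear (not squared) condition-number hypothesis \eqref{eq:assumption-2}. The Barlow reduction is identical to what is invoked in Proposition \ref{prop1}, so the only novel work is analyzing the intra-block call itself, i.e.\ showing
\[
  \|I - \widehat{Q}_j^T \widehat{Q}_j\| = \mathcal{O}(\epsilon)
\]
for the $\widehat{Q}_j$ produced by Figure \ref{algo:randQR} applied to $\widehat{V}_j$.

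First I would track the sketch. Assuming $\Theta$ is an $(\varepsilon,\delta,s)$ oblivious $\ell_2$-subspace embedding realized in the relevant subspace, Corollary \ref{cor:embeddingSingularVals} implies $\kappa(\Theta^T \widehat{V}_j)$ is within a constant factor of $\kappa(\widehat{V}_j)$. Householder QR applied to the short, fat matrix $\widehat{\mathbf{v}}_j = \Theta^T \widehat{V}_j$ is backward stable, producing a computed triangular factor $R_{j,j}$ whose singular values match those of $\widehat{\mathbf{v}}_j$ up to a relative perturbation of size $\mathcal{O}(\epsilon)$. Under \eqref{eq:assumption-2}, $R_{j,j}$ is therefore numerically nonsingular, and $\kappa(R_{j,j}) = \mathcal{O}(\kappa(\widehat{V}_j))$.

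Second, I would use the cited ``sketch-and-precondition'' property (\cite{Balabanov:2022:GMRES,Higgins:2024}) that $\widehat{V}_j R_{j,j}^{-1}$ is well-conditioned: since $\Theta^T \widehat{V}_j R_{j,j}^{-1}$ is, up to $\mathcal{O}(\epsilon)$, the orthogonal Householder factor $\mathbf{q}_j$, its singular values lie in $[1-\mathcal{O}(\epsilon),\,1+\mathcal{O}(\epsilon)]$; pulling this back through Corollary \ref{cor:embeddingSingularVals} gives $\kappa(\widehat{V}_j R_{j,j}^{-1}) \leq \sqrt{(1+\varepsilon)/(1-\varepsilon)} + \mathcal{O}(\epsilon) = \mathcal{O}(1)$. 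This is the critical point where the linear-in-$\kappa(\widehat{V}_j)$ hypothesis \eqref{eq:assumption-2} enters, as opposed to the quadratic one \eqref{eq:assumption-1}: all the conditioning that CholQR cannot absorb is first ``preconditioned away'' by $R_{j,j}^{-1}$. Finally, the subsequent CholQR on $\widehat{V}_j R_{j,j}^{-1}$ is covered by the CholQR error bound \eqref{eq:cholqr}; since its input has $\mathcal{O}(1)$ condition number, the output $Q_j$ satisfies $\|I - Q_j^T Q_j\| = \mathcal{O}(\epsilon)$, which together with the Barlow reduction yields the proposition.

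The main obstacle I anticipate is making the second step rigorous in finite precision: one has to carry the Householder backward error through the triangular solve $V_j R_{j,j}^{-1}$ and then through the sketch, verifying that the composed perturbations are still small enough (relative to $\kappa(\widehat{V}_j)$) that the subspace-embedding conclusion $\kappa(\widehat{V}_j R_{j,j}^{-1}) = \mathcal{O}(1)$ survives. Identifying the precise scalar $c_2(\epsilon,n,s)$ that makes \eqref{eq:assumption-2} sufficient for this chain of perturbation bounds, and ensuring that the oblivious embedding constant $\varepsilon$ is absorbed cleanly into it, is where the calculation becomes delicate; the remaining invocations of \cite{Barlow:2024} and \eqref{eq:cholqr} are then essentially mechanical.
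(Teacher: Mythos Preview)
Your proposal is correct and follows essentially the same two-ingredient structure as the paper: the paper's proof simply cites \cite{Barlow:2024} for the BCGS2 reduction and \cite{Higgins:2024} for the fact that RandCholQR attains $\mathcal{O}(\epsilon)$ orthogonality under \eqref{eq:assumption-2}, without spelling out the RandCholQR analysis at all. Your outline of the second ingredient (sketch, Householder backward stability, embedding corollaries, then CholQR on an $\mathcal{O}(1)$-conditioned input) is precisely the argument behind the cited result, so you have merely unpacked what the paper black-boxes.
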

\begin{proof}
The proof of Proposition \ref{prop2} follows by combining the results from two facts: first, BCGS2 attains $O(\epsilon)$ orthogonality error overall provided each ``IntraBlk" step in line 3 satisfies \eqref{eq:intra-blk-0} and \eqref{eq:intra-blk} \cite{Barlow:2024}, and RandCholQR satisfies these conditions when \eqref{eq:assumption-2} is satisfied \cite[Theorem 5.3]{Higgins:2025}.
\end{proof}

Hence, compared to CholQR2, which requires~\eqref{eq:assumption-1}, RandCholQR improves the numerical robustness of the overall block orthogonalization process, while we expect only a small overhead in term of its execution time (as shown in Section~\ref{sec:perf}).

\section{Two-stage Block Orthogonalization Framework}
\label{sec:two-stage}

\begin{figure}[t]
  \begin{center}
  \fbox{\begin{minipage}[h!]{.9\linewidth}
    \footnotesize
    \input{codes/2stage}
  \end{minipage}}
  \end{center}
  \caption{Two-stage Block Orthgonalization with MPK.}
  \label{algo:two-stage}
\end{figure}
\begin{figure}[t]
\begin{center}
  \fbox{\begin{minipage}[h!]{.9\linewidth}
    \footnotesize
    \input{codes/2stage-flat}
  \end{minipage}}
  \caption{Two-stage Block Orthgonalization without MPK.}
  \label{algo:two-stage-flat}
\end{center}
\end{figure}

BCGS2 discussed in Section~\ref{sec:one-stage} can orthogonalize the set of $s+1$
basis vectors with only five synchronizations
and using BLAS-3 operations to performs most of its local computation. 
However, its performance may be still limited
by the small step size $s$ required to maintain the stability of MPK (e.g., to ensure that the $s+1$ basis vectors are numerically full rank).
In order to improve the performance of the block orthogonalization while using the small step size, ``two-stage'' block orthogonalization algorithms have been proposed~\cite{Yamazaki:2024}.
Instead of fully-orthogonalizing the basis vectors at every $s$ steps,
the two-stage algorithm only ``pre-processes'' the $s$-step basis vectors $V_j$ at every $s$ steps. The objective of this first stage is to maintain the well-conditioning of the basis vectors at a cost that is lower than that required for the full orthogonalization or with the same cost as the initial orthogonalization (e.g., BCGS with CholQR).
Then once a sufficient number of basis vectors, $\widehat{s}$, are generated to obtain higher performance,
the second stage orthogonalizes the $\widehat{s}$ basis vectors at once.
For the following discussion,
we refer to the blocks of $s$ and $\widehat{s}$ vectors as the ``panels'' and ``big panels'', respectively.
Also, to distinguish from the two-stage algorithms,
we refer to the BCGS algorithms discussed in Sections~\ref{sec:sstep} and \ref{sec:one-stage} as ``one-stage'' algorithms.

For this paper, we focus on the specific variant of the two-stage algorithm shown in Figure~\ref{algo:two-stage}. This variant first (roughly) orthogonalizes the new panel of basis vectors $V_j$ against the previous big panels using BCGS (Line 10). It then pre-processes the resulting panel vectors $\widehat{V}_j$ within the current big panel (Line 12). Finally, the second stage fully orthogonalizes the big panel, first orthogonalizing the big panel using CholQR (Line 15), followed by BCGS with CholQR on the big panels (Line 19). 

The main objective of the pre-processing stage is to keep the condition number of the basis vectors small at a low cost. Namely, for the two-stage algorithm to maintain the same level of the stability as the one-stage algorithm, after the first inter-big panel BCGS (Line 10 of Figure~\ref{algo:two-stage}), the condition number of the big panel is hoped to be the same order as that of each panel in the one-stage algorithm,
\begin{equation}
\label{eq:two-stage-preprocess}
   \kappa(\widehat{V}_{\ell+1:j}) \approx \kappa(\widehat{V}_{\ell+1}).
\end{equation}
Without the pre-processing step (i.e., one-stage with $s=\widehat{s}$), the condition number of the basis vectors will increase exponentially with the step size (see numerical results in~\cite{Yamazaki:2024}).

When it is seen as the generic block orthogonalization scheme, without MPK, in Figure~\ref{algo:two-stage-flat}, this two-stage algorithm can be considered as BCGS2 (shown in Figure~\ref{algo:bcgs2}) that orthogonalizes the big panel as a set of block vectors, where the pre-processing step, followed by CholQR,
is used as the first intra-block orthogonalization (Lines 3 to 8). 
Hence, if the condition number of the big panel $\widehat{V}_{\ell+1:t+1}$, after the pre-processing step (Lines 4 to 6) is $\mathcal{O}(1)$, then the orthogonality error of the big panel $\widehat{Q}_{\ell+1:t+1}$ after the first CholQR (Line 7) is $\mathcal{O}(\epsilon)$, and thus, the overall two-stage block orthogonalization is stable with $\mathcal{O}(\epsilon)$ orthogonality error of the resulting vectors, i.e.,
$$\|I - Q_{1:t+1}^T Q_{1:t+1}\| = \mathcal{O}(\epsilon).$$

In the next section, we introduce two pre-processing schemes and
discuss the condition on the input big panel $\kappa(\widehat{V}_{\ell+1:t+1})$ to ensure 
the overall stability of the two-stage algorithm. 


\section{Preprocessing Schemes for Two-stage Framework}
\label{sec:preproc}

We first formally establish the following proposition.
\begin{proposition}\label{prop_2stage}
If a pre-processing scheme can maintain the condition number of the big panel to be bounded as
\begin{equation}\label{eq:assumption}
    \kappa\left(\overline{Q}_{\ell+1:t+1}\right) = O(1),
\end{equation}
then the overall stability of the two-stage scheme \eqref{eq:eps} is ensured.
\end{proposition}
\begin{proof}
Using~\eqref{eq:cholqr}, condition \eqref{eq:assumption} implies that $\widehat{Q}_{\ell+1:t+1}$ produced by 
the first intra big-panel CholQR factorization of $\overline{Q}_{\ell+1:t+1}$ 
at Line 7 of Figure \ref{algo:two-stage-flat} satisfies
\begin{equation*}
    \|I - \widehat{Q}_{\ell+1:t+1}^T\widehat{Q}_{\ell+1:t+1}\| = O(\epsilon).
\end{equation*}
Therefore, the first intra-block orthogonalization, which is based on the pre-processing scheme satisfying \eqref{eq:assumption} followed by CholQR, satisfies the orthogonality error \eqref{eq:intra-blk-0} of the big-panel,
which is required by \cite{Barlow:2024} to ensure the overall $\mathcal{O}(\epsilon)$ orthogonality error~\eqref{eq:eps} of the two-stage scheme.
\end{proof}

\subsection{BCGS with Pythagorean Inner Product (BCGS-PIP)}

\begin{figure}[t]
  \centerline{\fbox{\begin{minipage}{.9\linewidth}
    \footnotesize
    \input{codes/BCGS-pip}
  \end{minipage}}}
  \caption{BCGS with Pythagorean Inner Product (BCGS-PIP).}
  \label{algo:bcgs-pip}
\end{figure}

Our first pre-processing scheme for the two-stage algorithm
is based on the single-reduce variant~\cite{Carson:2021,Yamazaki:2020} of BCGS with CholQR,
shown in Figure~\ref{algo:bcgs-pip}.
Instead of explicitly computing the Gram matrix, this variant uses the Pythagorean rule, hence requiring only one global-reduce for both the inter-block and intra-block orthogonalization.\footnote{
For the first block (i.e., $j = \ell + 1$), BCGS-PIP is equivalent to CholQR.}
In addition, it was shown~\cite[Theorem 3.4]{Carson:2021} that 
if the condition number of the input basis vectors is bounded as
\begin{equation}
  \label{eq:assumption-3}
  c_3(\epsilon, n, \widehat{s})\kappa(\widehat{V}_{\ell+1:t+1})^2 < 1/2,
\end{equation}
where $c_3(\epsilon, n,s)$ is a constant and
$t+1$ is the last block index of the big panel (i.e., $t := \ell + \widehat{s}/s - 1$), then
the orthogonality error of the big panel computed by BCGS-PIP satisfies
\begin{equation}
 \|I - \overline{Q}_{\ell+1:t+1}^T \overline{Q}_{\ell+1:t+1}\| \le c_3(\epsilon, n, \widehat{s})\kappa(\widehat{V}_{\ell+1:t+1})^2.\label{eq:orthBoundQhatQ}
\end{equation}

Hence, we have the following proposition:
\begin{proposition}\label{prop71}
If this pre-processing scheme can maintain the condition number of the big panel to satisfy the condition~\eqref{eq:assumption-3},
then the overall stability of the two-stage scheme is ensured.
\end{proposition}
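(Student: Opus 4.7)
The plan is to reduce the proposition to the one-stage BCGS2 analysis already established in Section~\ref{sec:one-stage}, using the equivalence noted in the paragraph preceding the proposition: Figure~\ref{algo:two-stage-flat} realizes the two-stage scheme as BCGS2 applied to the big panel, with the composite step ``pre-processing loop (Lines~4--6) followed by the first CholQR (Line~7)'' playing the role of the first intra-block orthogonalization of BCGS2. Hence it is enough to verify the intra-block orthogonality condition~\eqref{eq:intra-blk} for this composite step, after which the Barlow et al. result of~\cite{Barlow:2024} (invoked in the same way as in Propositions~\ref{prop1} and~\ref{prop2}) delivers the overall $\mathcal{O}(\epsilon)$ bound~\eqref{eq:eps} on $Q_{1:t+1}$.

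First I would use assumption~\eqref{eq:assumption-3} together with~\cite[Theorem~3.4]{Carson:2021}, i.e. the bound~\eqref{eq:orthBoundQhatQ}, to control the pre-processing output: since $c_4(\epsilon)\kappa(\widehat{V}_{\ell+1:t+1})^2$ is at most a modest constant (of order $1/2$ under~\eqref{eq:assumption-3} when the constants $c_3$ and $c_4$ are of the same order), the singular values of $\widehat{Q}_{\ell+1:t+1}$ lie in an interval of the form $[\sqrt{1-\eta},\sqrt{1+\eta}]$ with $\eta<1$, and consequently $\kappa(\widehat{Q}_{\ell+1:t+1}) = \mathcal{O}(1)$. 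In particular the CholQR safety condition~\eqref{eq:assumption-1} is satisfied with large margin for the big panel.

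Next I would apply the single-pass CholQR bound~\eqref{eq:cholqr} to $\widehat{Q}_{\ell+1:t+1}$: the output $Q_{\ell+1:t+1}$ of Line~7 in Figure~\ref{algo:two-stage-flat} satisfies
\begin{equation*}
\|I - Q_{\ell+1:t+1}^T Q_{\ell+1:t+1}\| \;=\; \mathcal{O}(\epsilon)\,\kappa(\widehat{Q}_{\ell+1:t+1})^2 \;=\; \mathcal{O}(\epsilon),
\end{equation*}
which is exactly~\eqref{eq:intra-blk} for the composite intra-block step. Finally, I would close the argument by appealing to~\cite{Barlow:2024} as in the proofs of Propositions~\ref{prop1} and~\ref{prop2}: once the intra-block step satisfies~\eqref{eq:intra-blk}, the enclosing BCGS2 re-orthogonalization on Lines~9--13 propagates this into the global $\mathcal{O}(\epsilon)$ orthogonality~\eqref{eq:eps} for $Q_{1:t+1}$.

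The main obstacle I anticipate is bookkeeping the floating-point propagation across the pre-processing loop (Lines~4--6 of Figure~\ref{algo:two-stage-flat}): one must check that the assumption~\eqref{eq:assumption-3} on the \emph{computed} big panel $\widehat{V}_{\ell+1:t+1}$ is genuinely maintained by the per-panel BCGS-PIP calls, i.e. that the intermediate panels do not inflate $\kappa(\widehat{V}_{\ell+1:j})$ past the threshold before $j=t+1$ is reached. Once~\eqref{eq:two-stage-preprocess} is established at each intermediate $j$, the rest of the argument proceeds by the two ingredients above (Carson--Lund--Rozlo\v{z}n\'ik stability of BCGS-PIP, and Barlow's BCGS2 reduction), and no new tool is needed.
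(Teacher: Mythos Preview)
Your proposal is correct and follows essentially the same route as the paper: reduce to the BCGS2 framework of \cite{Barlow:2024}, use~\eqref{eq:orthBoundQhatQ} under assumption~\eqref{eq:assumption-3} to conclude $\kappa(\widehat{Q}_{\ell+1:t+1})=\mathcal{O}(1)$ (the paper invokes Weyl's inequality here, which is your singular-value interval argument), and then apply~\eqref{eq:cholqr}. Your final ``obstacle'' paragraph is unnecessary, since the proposition takes~\eqref{eq:assumption-3} on the computed big panel as a hypothesis rather than something to be verified.
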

\begin{proof}
Because of the assumption~\eqref{eq:assumption-3} and the bound~\eqref{eq:orthBoundQhatQ}, we have
\begin{equation*}
    \| I - \overline{Q}^T_{\ell+1:t+1}\overline{Q}_{\ell+1:t+1}\| < 1/2. 
\end{equation*}
Thus, by Weyl's inequality, we obtain
\[
\left\{
\begin{array}{l} 
    \sigma_{\min} \left(\overline{Q}^T_{\ell+1:t+1}\overline{Q}_{\ell+1:t+1}\right) \geq 1 - \| I - \overline{Q}^T_{\ell+1:t+1}\overline{Q}_{\ell+1:t+1}\| > 1/2\\
    \sigma_{\max} \left(\overline{Q}^T_{\ell+1:t+1}\overline{Q}_{\ell+1:t+1}\right) \leq 1 + \| I - \widehat{Q}^T_{\ell+1:t+1}\overline{Q}_{\ell+1:t+1}\| < 3/2.
\end{array}
\right.
\]
Therefore,
\begin{equation}
    \kappa\left(\overline{Q}_{\ell+1:t+1}\right) = \sqrt{\kappa\left(\overline{Q}_{\ell+1:t+1}^T\overline{Q}_{\ell+1:t+1}\right)} = O(1), \label{eq:Qhatcond}
\end{equation}
and the proof follows by Proposition~\ref{prop_2stage}.
\end{proof}

A similar two-stage scheme based on BCGS-PIP was studied in~\cite{Yamazaki:2024}. The variant studied in this paper is slightly different and has a more stable behavior. Namely, its robustness depends on the condition number of the big panel $\widehat{V}_{\ell+1:t+1}$ as shown in the condition~\eqref{eq:assumption-3}, while the stability of the previous variant depended on the condition number of the accumulated big panels, i.e., $c(\epsilon) \kappa([Q_{1:\ell},\widehat{V}_{\ell+1:t+1}])^2 < 1/2$.

Similarly to CholQR, BCGS-PIP can fail when
the condition number of the big panel is greater than
the reciprocal of the square-root of the machine epsilon $\epsilon$. 
This can cause numerical issues, especially for the ill-conditioned basis vectors generated by MPK.

\subsection{Randomized BCGS}

\begin{figure}[t]
  \begin{subfigure}[b]{\linewidth}
  \centerline{\fbox{\begin{minipage}{.9\linewidth}
    \footnotesize
    \input{codes/incRand}
  \end{minipage}}}
  \caption{The $j$th step (for PreProc for Figure~\ref{algo:two-stage-flat})} \label{algo:incRand1}
  \end{subfigure}
\smallskip
  \begin{subfigure}[b]{\linewidth}
  \centerline{\fbox{\begin{minipage}{.9\linewidth}
    \footnotesize
    \input{codes/incRand_v2}
  \end{minipage}}}
  \caption{Accumulated steps.} \label{algo:incRand2}
  \end{subfigure}  
  \caption{Randomized BCGS2 (RandBCGS2).}
  \label{algo:incRand}
\end{figure}

To enhance the stability of the pre-processing scheme based on BCGS-PIP, we consider a randomized BCGS scheme shown in Figure~\ref{algo:incRand} as our pre-processing algorithm. This algorithm sketches the big panel, but $s$ basis vectors at a time. To ensure stability, we orthogonalize the sketched vectors $\widehat{\mathbf{v}}_{\ell+1:t+1}$ using BCGS2 with Householder intra-block orthogonalization.
Similar randomized block orthogonalization algorithms were discussed in~\cite{Balabanov:2021:block,Balabanov:2022:GMRES,Higgins:2025}. We used this randomized algorithm as the pre-processing scheme for the two-stage BCGS2, in order to maintain the well-conditioning of the big panel $\overline{Q}_{\ell+1:t+1}$ and obtain the overall $\mathcal{O}(\epsilon)$ orthogonality error of $Q_{1:t+1}$.

\begin{proposition}\label{prop72}
The overall stability of two-stage algorithm is ensured when randomized BCGS pre-processing is used with $\Theta^T$ that forms an $\mu$-subspace embedding over the range of $\widehat{V}_{\ell+1:t+1}$, and the following condition is satisfied:
\begin{equation}\label{eq:assumption-4}
  c_4(\epsilon, n, \widehat{s})\kappa(\widehat{V}_{\ell+1:t+1}) < 1/2.
\end{equation}
where $c_4(\epsilon, n,s)$ is a constant,
or equivalently when MPK generates numerically full-rank basis vectors for each big panel $\widehat{V}_{\ell+1:t+1}$.
\end{proposition}
\begin{proof}
Observe that RandBCGS2 (in Figure \ref{algo:incRand2}) is identical to RandHH (in Figure \ref{algo:randQR_nochol}) except that BCGS2-HH is used, instead of HH factorization, on the sketched vectors to generate the upper-triangular matrix. In \cite[Corollary 5.2]{Higgins:2025}, it was proven that RandHH results in $\kappa\left( \overline{Q}_{\ell+1:t+1}\right) = O(1)$. 

We prove in Appendix that the backward error of the QR factorization via BCGS2-HH only differs from the Householder QR backward error by a constant factor. Hence, the backward error analysis in \cite[Section 5.2.4]{Higgins:2025} only differs by a constant factor when BCGS2+HH, instead of HH, was used on the sketched vectors. 
Therefore, by \cite[Corollary 5.2]{Higgins:2025}, RandBCGS2 will generate the basis vectors $\overline{Q}_{\ell+1:t+1}$ whose condition number is bounded by $\kappa\left( \overline{Q}_{\ell+1:t+1}\right) = O(1)$, and the proof follows by Proposition~\ref{prop_2stage}.
\end{proof}
Assuming that the condition number of the big panel is in the same order as that of each panel in the one-stage algorithm (as in the condition~\eqref{eq:two-stage-preprocess}),
the condition~\eqref{eq:assumption-4} on the big panel for the two-stage algorithm is equivalent to the condition~\eqref{eq:assumption-2} on the panel for the one-stage algorithm, and hence the two-stage algorithm with RandBCGS2 is as stable as 
the one-stage algorithm with RandHH.

The roundoff error analysis of a randomized BCGS, that is similar to the one in Figure~\ref{algo:incRand}, has been presented in~\cite{Balabanov:2021:block}, where
their framework allows generating the sketches of the block for the inter and intra block orthogonalization, separately, while in this paper, we focus on the one in Figure~\ref{algo:incRand} that uses a single sketch of the big panel (similar to randCholQR, but by generating the sketch of each block at a time).

BCGS-PIP and RandBCGS2, shown in Figures~\ref{algo:bcgs-pip}
and \ref{algo:incRand}, respectively, have a similar algorithmic structure. In particular, at every $s$ steps, both algorithms would require one global-reduce, followed by a small local computation (either Cholesky factorization of the Gram matrix or BCGS2 orthogonalization of the sketched vectors) and forward-substitution to generate the basis vectors $\overline{Q}_j$. Though RandBCGS2 has the larger complexity for the local computation, the main factor that impacts their performance difference is the dot-products required for BCGS-PIP and the random-sketching required for RandGCGS.

\begin{table*}[t]
\centerline{\scriptsize
\begin{tabular}{l||ll|ll|ll}
                      & \multicolumn{6}{c}{Flop Count}\\
                      & \multicolumn{2}{c|}{Projection} & \multicolumn{2}{c|}{Normalization} & \multicolumn{2}{c}{Total}\\
\hline\hline
%
%
One-stage:\\
\;\;CGS2                 & $2nm(m+1/2)$                 & $2nm(m+1/2)$           & $3nm$                        &   $3nm$                  & $2nm^2$         & $2nm^2$        \\ 
\;\;BCGS2+CholQR2                  & $2nm(s+1)(m-s+1/2)/s$        & $2nm(s+1)(m-s+1/2)/s$  & $4nm(s+1)(s+3/2)/s$        & $2nm(s+1)(s+3/2)/s$    & $2nm^2(s+1)/s$  & $2nm^2(s+1)/s$ \\ 
\;\;BCGS2+RCholQR($\hat{s}=s$)       & $2nm(s+1)(m-s+1/2)/s$        & $2nm(s+1)(m-s+1/2)/s$  & $4nm(s+1)(7s/4 + 2)/s$  & $2nm(s+1)(s+3/2)/s$    & $2nm^2(s+1)/s$  & $2nm^2(s+1)/s$ \\
\hline
Two-stage:\\
\;\;BCGS-PIP                      & $2nm(s+1)(m-s+1/2)/s$        & $2nm(s+1)(m-s+1/2)/s$  & $4nm(s+1)(s+3/2)/s$        & $2nm(s+1)(s+3/2)/s$    & $2nm^2(s+1)/s$  & $2nm^2(s+1)/s$ \\ 
\;\;RBCGS2($s<\hat{s}<m$)     &  $2nm(s+1)(m-\hat{s}+1/2)/s$ & $2nm(\hat{s}+1)(m-\hat{s}-1/2)/\hat{s}$  
                                                                                  & $2nm(s+1)(5\hat{s}/2 - s/2 + 5/2)/s$  & $2nm(\hat{s}+1)(\hat{s}+3/2)/\hat{s}$ 
                                                                                                                                            & $2nm^2(s+1)/s$ & $2nm^2(\hat{s}+1)/\hat{s}$ \\
                          &                              &                        & \;\;$ + 2nm(\hat{s}+1)(\hat{s}+3/2)/\hat{s}$  & &\\
\;\;RBCGS2($\hat{s}=m$)       & $2nm(s+1)(5m/2-s/2+5/2)/s$       &                        & $nm(s+1)^2/s$             & $2n(m + 1)(m + 3/2)$    & $5nm^2(s+1)/s$        & $2nm^2$     \\ 
\end{tabular}
}
\caption{Asymptotic computational complexity of orthogonalization scheme over one GMRES restart cycle,
         where ``RCholQR'' and ``RBCGS'' are RandCholQR and RandBCGS2, respectively.
         For the complexity with the random sketch, we use Gaussian Sketch with the sketch size of $\widehat{m}  = 2 (\widehat{s}+1)$.
         For each components, there are two columns for each component, ``Projection'', ``Normalization'', and ``Total'',
         where for the standard and $s$-step algorithms, the first and second columns are for orthogonalization and then for reorthogonalization,
         while for the remaining algorithms, they are for the preprocessing based on Gaussian random-sketching and then for orthogonalization.
         For two-stage, ``Projection'' and ``Normalization'' are the inter-block and intra-block orthogonalization of big panels.
         ``Total'' only shows the leading terms of the complexity.} \label{tab:costs-comp}
\end{table*}

\subsection{Remarks on Complexity}

We discuss the overall complexity of the various block orthogonalization algorithms in Section~\ref{sec:complexity}, but we provide a few remarks on the complexity, comparing the one-stage and two-stage algorithms, here.

Compared to the one-stage algorithm with RandHH, the two-stage algorithm with RandBCGS2 reduces the communication cost by delaying the orthogonalization until $\widehat{s}+1$ basis vectors are generated. However, this two-stage algorithm needs to sketch the big panel with the total of $\widehat{s}+1$ basis vectors, hence requiring a larger sketch size than the one-stage algorithm.
For instance, with the Gaussian sketch, its sketch size for the two-stage algorithm is proportional to the number of columns in the big panel, $\widehat{s}+1$, while the one-stage algorithm sketches each panel of $s+1$ basis vectors, and its sketch size is proportional to the panel size, $s+1$. Hence, in the randomized two-stage algorithm, there is a trade-off between the reduced communication cost for the orthogonalization and the increased computational cost for sketching, which we discuss in the next section.\footnote{%
As Count-Gaussian sketching has $O(n)$ complexity, it has the potential to remove this overhead of the randomized two-stage algorithm.}

When $\widehat{s} < m$, the two-stage algorithm with the two pre-processing schemes in Section~\ref{sec:preproc} requires about the same computational cost as the one-stage algorithm. On the other hand, when $\widehat{s}=m$, the two-stage algorithm has a lower computational cost than the one-stage algorithm (see the asymptotic complexity discussion in Section~\ref{sec:complexity}).
This is because
if the preprocesing step can keep the well-conditioning of the basis vectors over the whole restart-cycle of $s$-step GMRES, then the reorthogonalization (Line 19) is not needed, reducing the total computational cost of the orthogonalization.
In this case, it is also possible to skip the $\ell_2$-orthogonalization (i.e., CholQR on Line 15), and solve the least-square problem in the sketched space. Nevertheless, in this paper, we will focus on generating the $\ell_2$-orthogonal basis vectors, but will show the breakdown of the orthogonalization time, including the CholQR time, in Section~\ref{sec:perf}.

\section{Asymptotic Complexity}
\label{sec:complexity}

Tables~\ref{tab:costs-comp} and \ref{tab:costs-comm} compare the computational, storage, and communication
complexities of the different block orthogonalization schemes, respectively. 
These complexities are total costs using one MPI process, and not for distributed-memory.
\begin{itemize}
\item The $s$-step GMRES includes the starting vector to the set of the vectors to be orthogonalize.
      For instance, with $s=1$, $s$-step orthogonalizes two vectors at each step. This leads to 
      about $2\times$ more flops for ``Projection'' for $s$-step GMRES, compared to the standard GMRES.
      In addition, CholQR of two vectors requires about $(10/3)\times$ more flops
      than computing a dot-product and scaling of a single vector.

\item The randomized block orthogonalization algorithm performs the preprocessing based on random-sketching to generate the well-conditioned basis vectors,
       followed by the $\ell_2$-orthogonalization of the basis vectors. 
       Hence, the randomized algorithm replaces the first orthogonalization process of the standard algorithm
       with the randomized transformation of the basis vectors, while
       the orthogonalization process is identical and
       performs the same number of flops as the re-orthogonalization process of the standard algorithm.

\item  With $\widehat{s}=s$ (one-stage), compared to CholQR2,
       RandCholQR has higher computation complexity and larger communication volume,
       but the relative overhead is small (about the order of $s/m$) in the total complexity. This leads to insignificant
       increase in the orthogonalization time (for improving the numerical stability) in our performance tests.

\item One-stage algorithm with CholQR2 and two-stage algorithm with BCGS-PIP have about the same computational complexity.
       
\item With $\widehat{s}=m$, the two-stage framework has the best communication latency cost, but random-sketching has the
      highest computational overhead, where the total computational cost of the orthogonalization
      is increased by a factor of $1.75\times$. Nevertheless, when the performance is limited by the latency,
      this computational overhead may not be significant.
      
      Though the storage cost is increased by a factor of three, which
      could limit the use of the dense Gaussian sketch, the overhead can be reduced by using sparse Count sketch.

\item These storage and computational overheads may be reduced using a smaller sketch size ($s < \widehat{s} < m)$.
      This reduces the overhead to be $\mathcal{O}(\widehat{s}/m)$, though the reduction in the latency
      is also reduced.
      Nevertheless, in our experiments, the two-stage algorithm obtained the best performance using $\widehat{s}=m$.
\end{itemize}

\begin{table}[t]
\begin{center}\footnotesize
\begin{tabular}{l||l|ll}
                      &                 & \multicolumn{2}{c}{Communication}\\
                      & Storage         & \multicolumn{1}{c}{Latency} & \multicolumn{1}{c}{Volume}\\
\hline                
CGS2                  & $nm$            & $4m$               & $nm(2m+4)$\\
BCGS2+CholQR2         & $nm$            & $4\frac{m}{s}$     & $nm\frac{2m+4+4s}{s}$\\
BCGS2+RCholQR($\widehat{s}=s$)   & $n(m+\widehat{m})$  & $4\frac{m}{s}$     & $nm\frac{2m+4+4s+\widehat{m}}{s}$\\
RBCGS2($s<\widehat{s}<m$) & $n(m+\widehat{m})$  & $\frac{m}{s} + 3\frac{m}{\widehat{s}}$ & $nm(\frac{m+2+2s+\widehat{m}}{s}+\frac{m+2+4\widehat{s}}{\widehat{s}})$\\
RBCGS2($\widehat{s}=m$)   & $n(m+\widehat{m})$  & $\frac{m}{s} + 1$          & $nm\frac{m/2+\widehat{m}+2+2s}{s} + 2nm$\\
\end{tabular}
\caption{Asymptotic storage and communication costs of orthogonalization scheme over one GMRES restart cycle,
         where ``RCholQR'' and ``RBCGS'' are RandCholQR and RandBCGS2, respectively.
} \label{tab:costs-comm}
\end{center}
\end{table}

\section{Numerical Experiments}
\label{sec:numeric}

\begin{figure}[t]
\centerline{
  \begin{subfigure}[b]{.53\linewidth}
   \centerline{
     \includegraphics[width=\linewidth]{./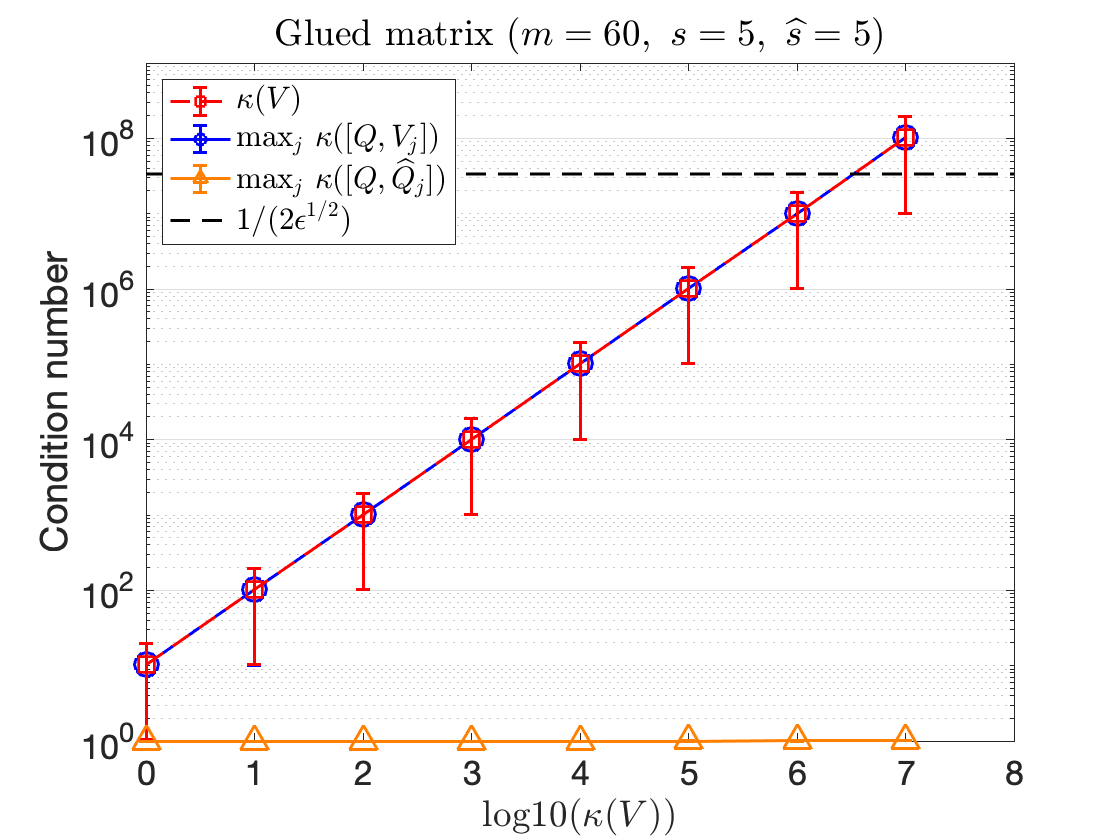}
    }
    \caption{Condition number with CholQR2.}\label{fig:cholqr2-cond}
  \end{subfigure}
  \begin{subfigure}[b]{.53\linewidth}
   \centerline{
     \includegraphics[width=\linewidth]{./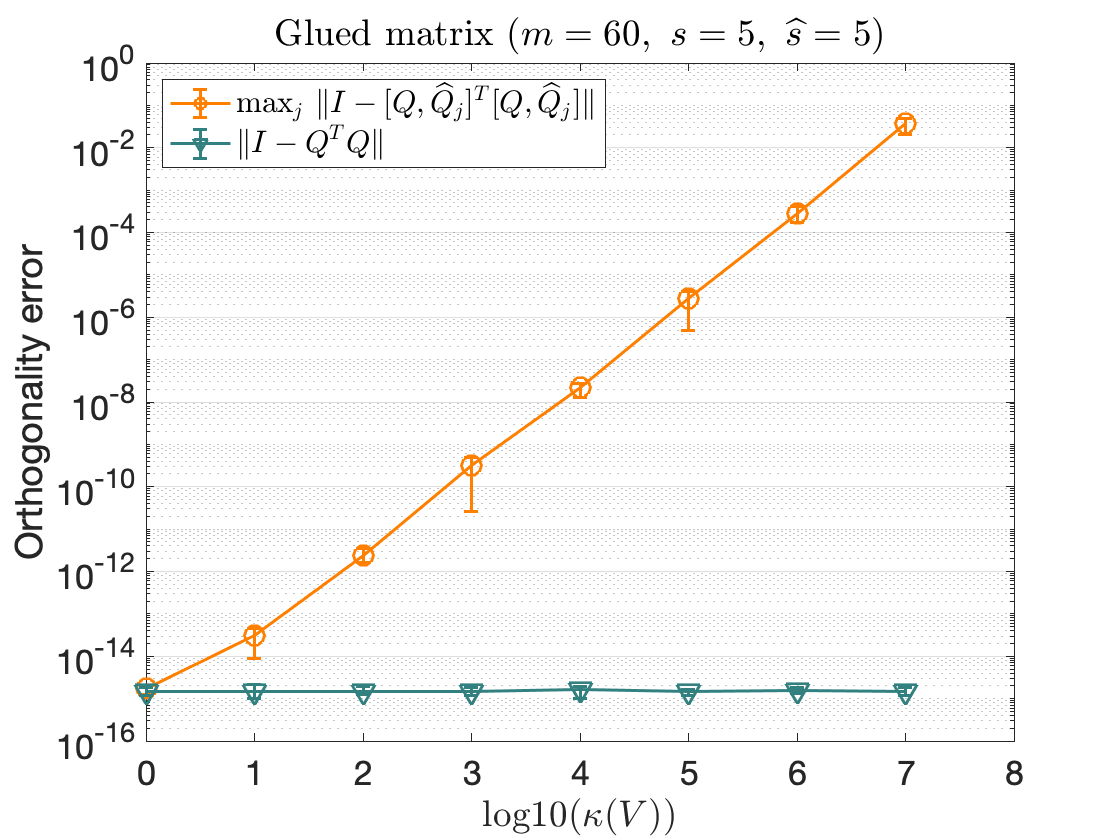}
    }
    \caption{Orthogonality errors with CholQR2.}\label{fig:cholqr2-orth}
  \end{subfigure}
}
\centerline{
  \begin{subfigure}[b]{.53\linewidth}
   \centerline{
     \includegraphics[width=\linewidth]{./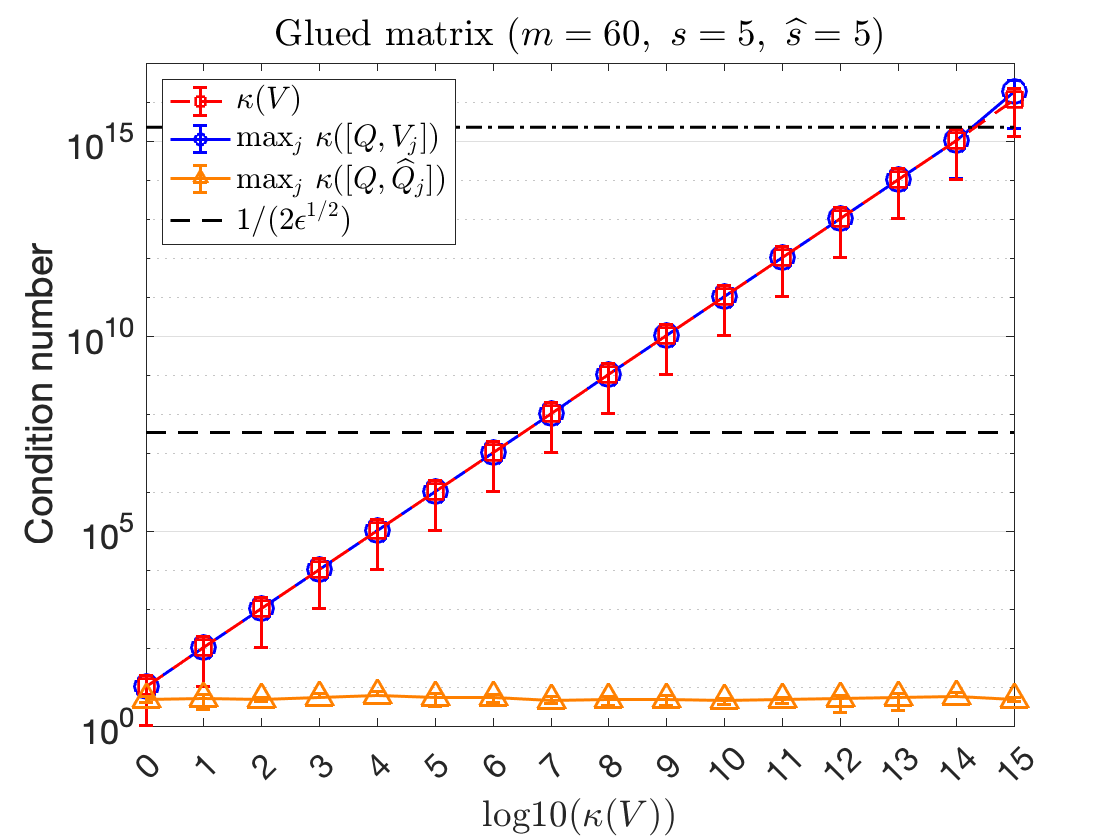}
    }
    \caption{Condition number with RandCholQR.}
  \end{subfigure}
  \begin{subfigure}[b]{.53\linewidth}
   \centerline{
     \includegraphics[width=\linewidth]{./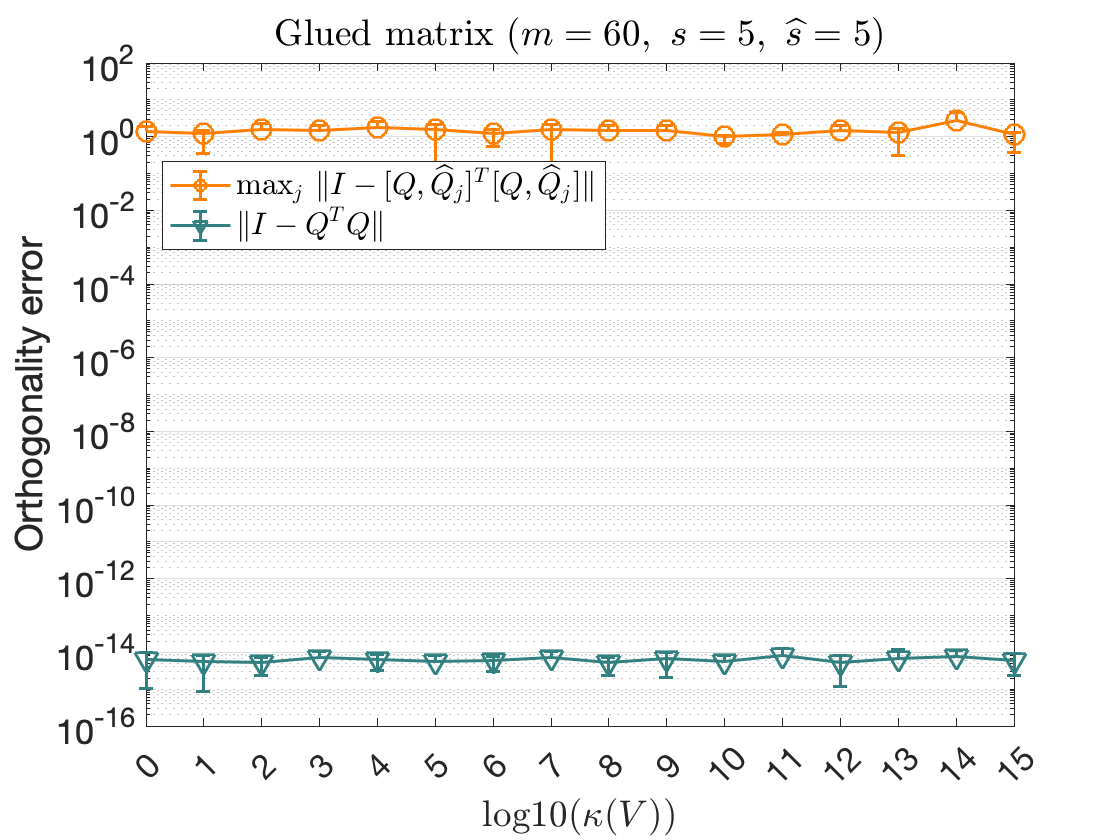}
    }
    \caption{Orthogonality errors with RandCholQR.}
  \end{subfigure}
}
 \caption{Condition number and orthogonality error with one-step BCGS2 with CholQR2 or RandCholQR first intra-block orthogonalization on {\tt glued} matrix, where Cholesky factorization of the Gram matrix $G$ in CholQR failed with non-positive pivots when $\kappa(V) \ge 10^8$ in Figures~\ref{fig:cholqr2-cond} and \ref{fig:cholqr2-orth}.}\label{fig:cholqr-error}
\end{figure}

We compared the orthogonality errors of the proposed block orthogonalization schemes using the default double precision in MATLAB. For these studies, instead of studying the numerical properties of the proposed methods within the $s$-step GMRES, we first treat them as general block orthogonalization schemes and use synthetic matrices as the input vectors. This allows us to control the condition number of the matrix easily. Numerical results showing how the condition numbers of the Krylov vectors could grow, can be found in~\cite{Yamazaki:2024}.

%
\ignore{
Finally, we study how the condition numbers grow for the basis vectors generated by MPK using various positive indefinite matrices of dimension between 200,000 and 300,000 from the SuiteSparse Matrix Collection\footnote{\url{https://sparse.tamu.edu}} (Figure\ref{fig:cond-mpk}). In order to maintain the stability of the original $s$-step method, we scaled the columns and then rows of the matrices by the maximum nonzero entries in the columns and rows (hence, all the resulting matrices are non-symmetric).
For all of our experiments with MPK and $s$-step GMRES, we used monomial basis vectors, even though using more stable bases, like Newton or Chebyshev bases, could reduce the condition number and improve the applicability of our approaches to a wider class of problems.
}

We first study how the orthogonality errors grow with the condition numbers of the input vectors for the one-stage algorithms. 
Figure~\ref{fig:cholqr-error} shows the orthogonality error when CholQR2 or RandCholQR
are used for the first intra block orthogonalization of the one-stage BCGS2.
Our test matrix is the {\tt glued} matrix~\cite{Smoktunowicz:2006} that has the same specified order of the condition number for each panel $V_j$ and for the overall matrix $V_{1:12}$. As expected, the orthogonality error of the basis vectors $\widehat{Q}$ was $\mathcal{O}(\epsilon)$ using CholQR2
when the condition number of the input matrix is smaller than $\mathcal{O}(\epsilon)^{-1/2}$.
With RandCholQR, the same $\mathcal{O}(\epsilon)$ orthogonality errors
were obtained as long as the input matrix is numerically full-rank with the condition number of $\mathcal{O}(\epsilon)^{-1}$, and hence demonstrating superior stability compared to CholQR2.

Next, we show that the two-stage approach obtains $\mathcal{O}(\epsilon)$ orthogonality error when the condition~\eqref{eq:assumption-3} or \eqref{eq:assumption-4} is satisfied, using BCGS-PIP or RandBCGS2, respectively.
Figure~\ref{fig:cond-2stage} shows the condition number of basis vectors using the two-stage approach with BCGS-PIP or RandBCGS2 as the preprocessing schemes, while Figure~\ref{fig:orth-2stage} shows the orthogonality errors using RandBCGS2. The test matrix is the {\tt glued} matrix, where each big panel has the condition number $\mathcal{O}(10^{15})$.
For this synthetic matrix, the Cholesky factorization of the Gram matrix failed with non-positive pivot in BCGS-PIP when the condition number of the accumulated panels increased more than $\mathcal{O}(\epsilon)^{-1/2}$. In contrast, RandBCGS2 managed to keep the $\mathcal{O}(1)$ condition number of the big panel $[Q_{1:\ell-1},\widehat{Q}_{\ell:t}]$, and the overall orthogonality error of $Q$ was $\mathcal{O}(\epsilon)$.

\begin{figure}[t]
  \begin{subfigure}[b]{\linewidth}
   \centerline{
     \includegraphics[width=.9\linewidth]{./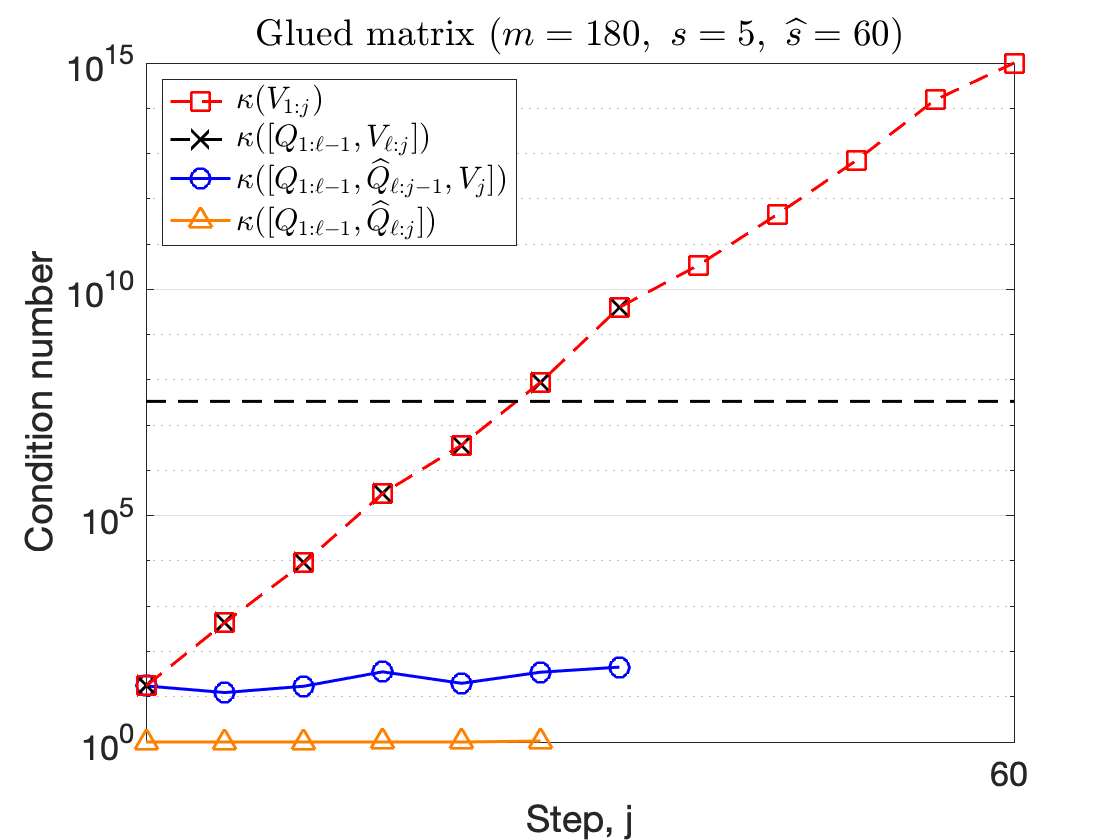}
    }
    \caption{BCGS-PIP.}
  \end{subfigure}

  \begin{subfigure}[b]{\linewidth}
   \centerline{
     \includegraphics[width=.9\linewidth]{./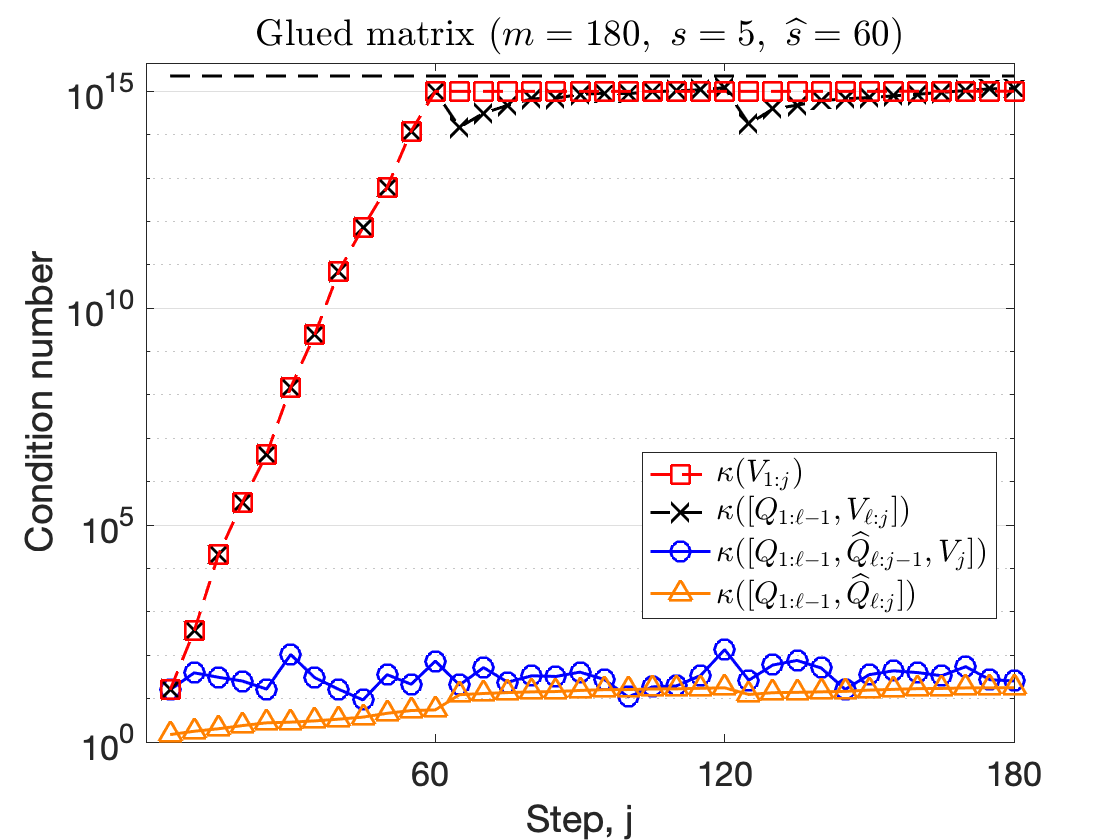}
    }
    \caption{RandBCGS2}\label{fig:cond-2stage}
  \end{subfigure}  
 \caption{Condition number (marker at every $s$ steps) using two-stage approach with BCGS-PIP and RandBCGS2 on {\tt glued} matrix with $(n,m,\hat{s},s)=(100000, 180, 60, 5)$.} \label{fig:cond-2stage}
\end{figure}

\begin{figure}[t]
   \centerline{
     \includegraphics[width=.9\linewidth]{./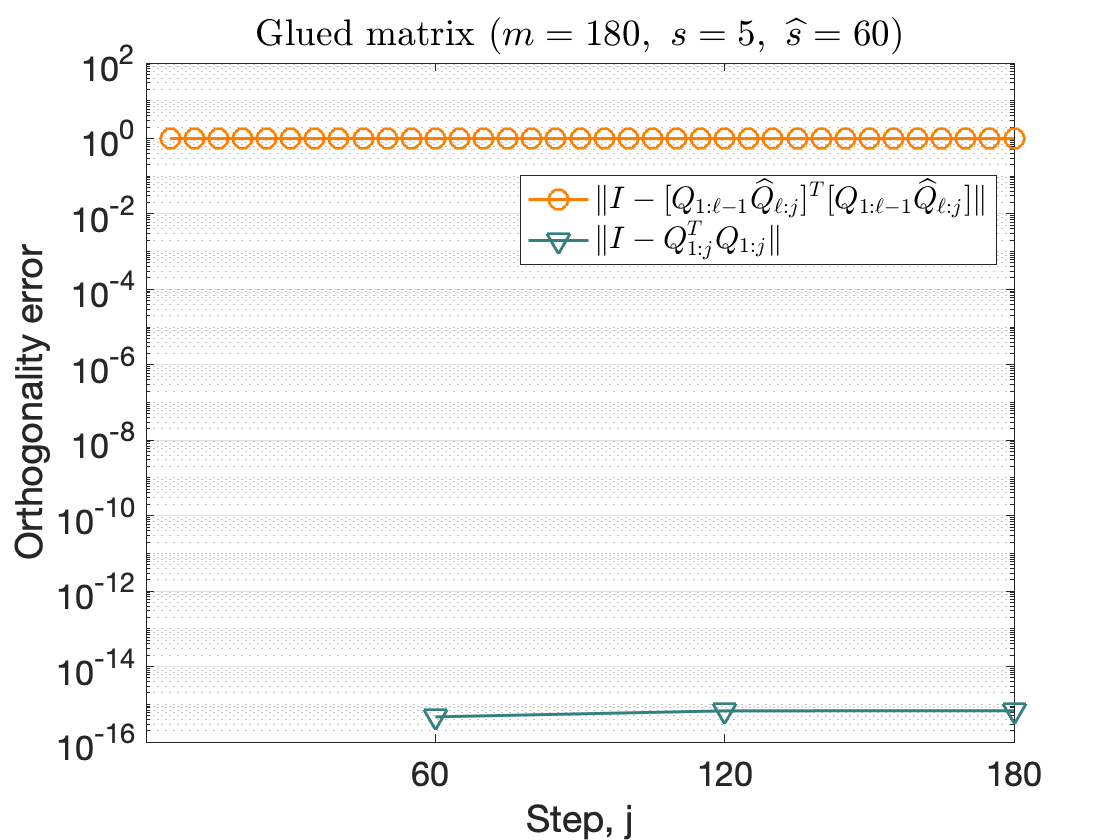}
    }
 \caption{Orthogonality error (orange circle marker at every $s$ steps, while green triangle marker at every $\widehat{s}$ steps) using two-stage approach with RandBCGS2 on {\tt glued} matrix with $(n,m,\hat{s},s)=(100000, 180, 60, 5)$.} \label{fig:orth-2stage}
\end{figure}

\begin{figure}[t]
\centerline{
     \includegraphics[width=.9\linewidth]{./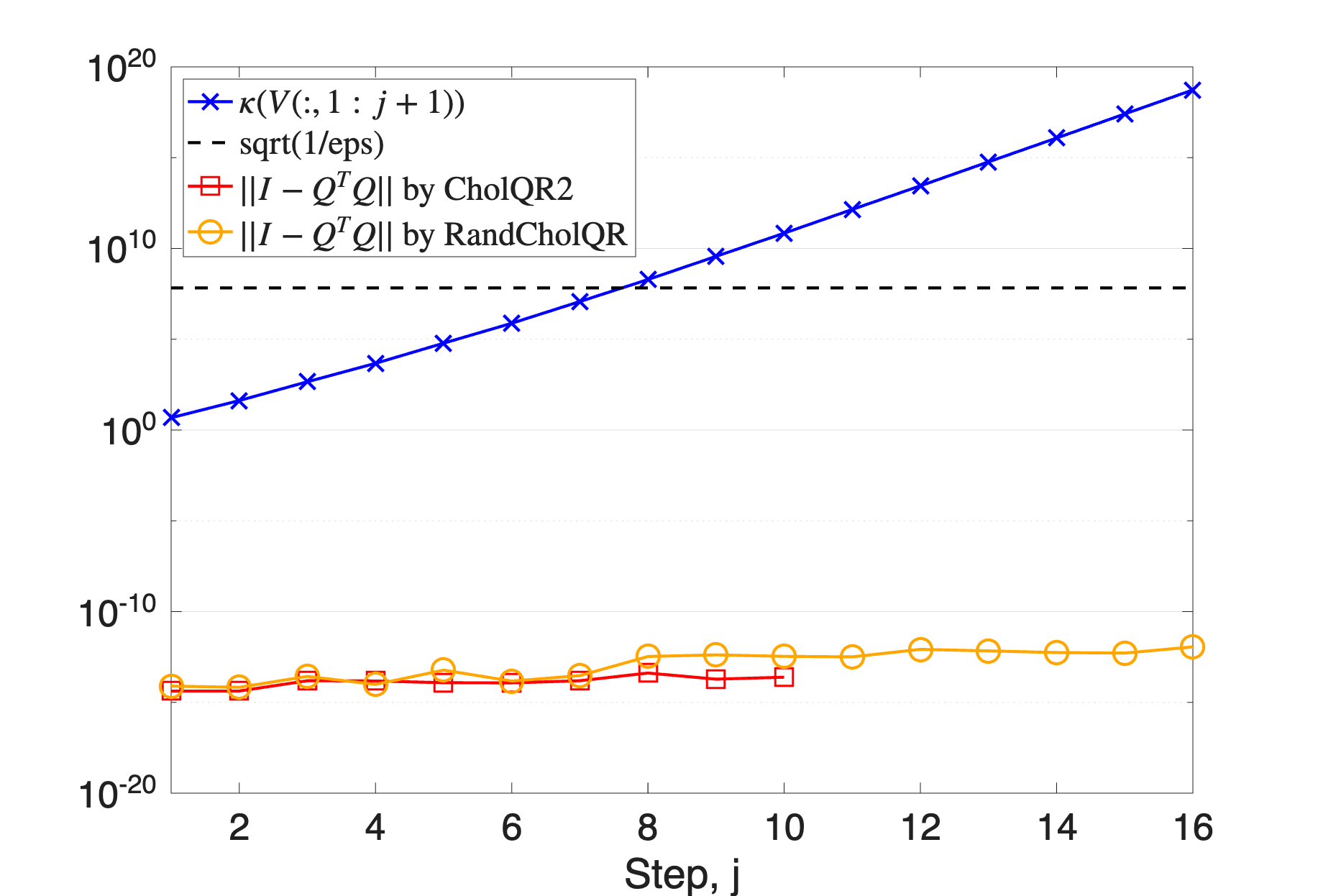}
}
 \caption{Condition number and orthogonality error with CholQR2 or RandCholQR where the matrix is generated by MPK for a 2D Laplace matrix of dimension $128^2$..}\label{fig:mpk-laplace}
\end{figure}

Finally, Figure~\ref{fig:mpk-laplace} shows the condition number and the orthogonality errors based on CholQR2 and RandCholQR where the basis vectors $V_{1:j+1}$ are generated by MPK. In particular, we generated $V_{1:j+1}$ such that $v_{j+1} := A v_j$ where $A$ is a 2D Laplacian matrix of dimension $128^2$, $v_1 = b / \|b\|$, $b := A x$, and the numerical values of all $x$'s entries are one. The condition number of $V_{1:j+1}$ grew quickly and CholQR2 failed when $j=11$, while RandCholQR could generate the orthonormal basis vectors as long as the input basis vectors are numerically full rank. Even though we used the step size of $s=5$ for all of our performance studies, these numerical results indicate that we could have used a larger step size with the random sketching techniques and potentially obtained higher performance of $s$-step GMRES.

\ignore
{
\begin{figure}[t]
  \begin{subfigure}[b]{\linewidth}
   \centerline{
     \includegraphics[width=.85\linewidth]{./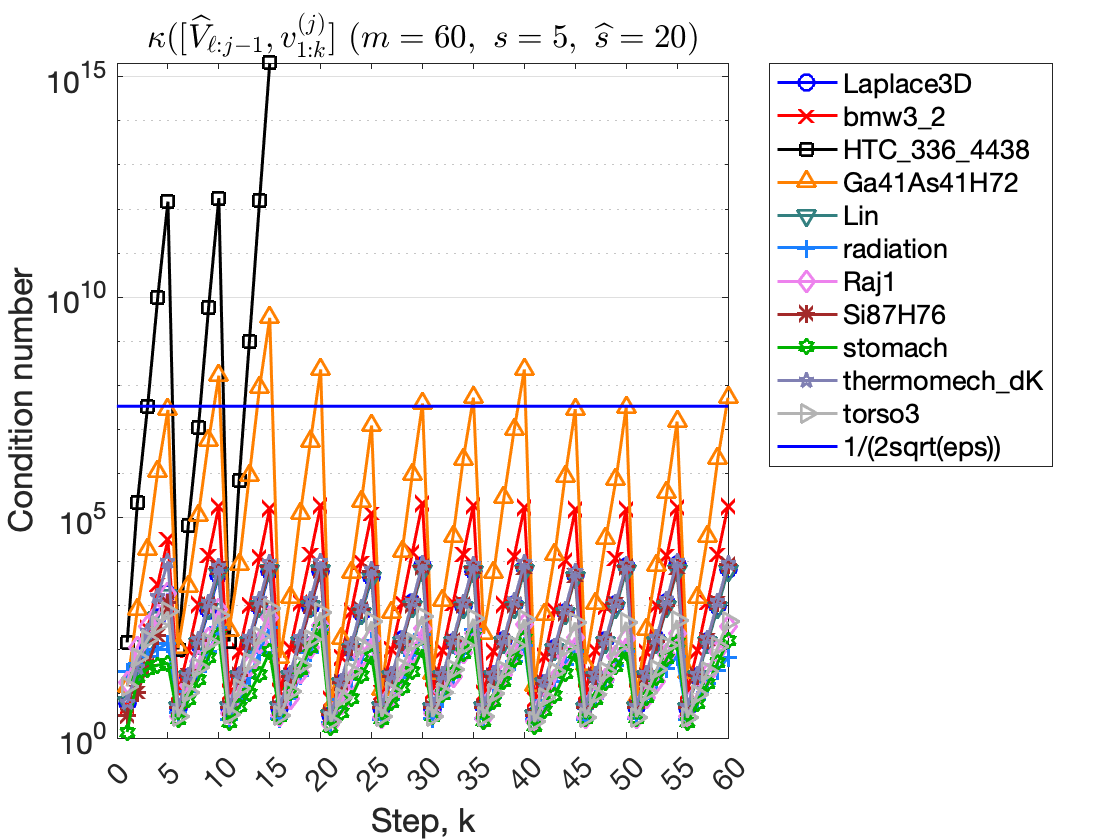}
    }
    \caption{BCGS-PIP pre-processing.}
  \end{subfigure}
  \begin{subfigure}[b]{\linewidth}
   \centerline{
     \includegraphics[width=.85\linewidth]{./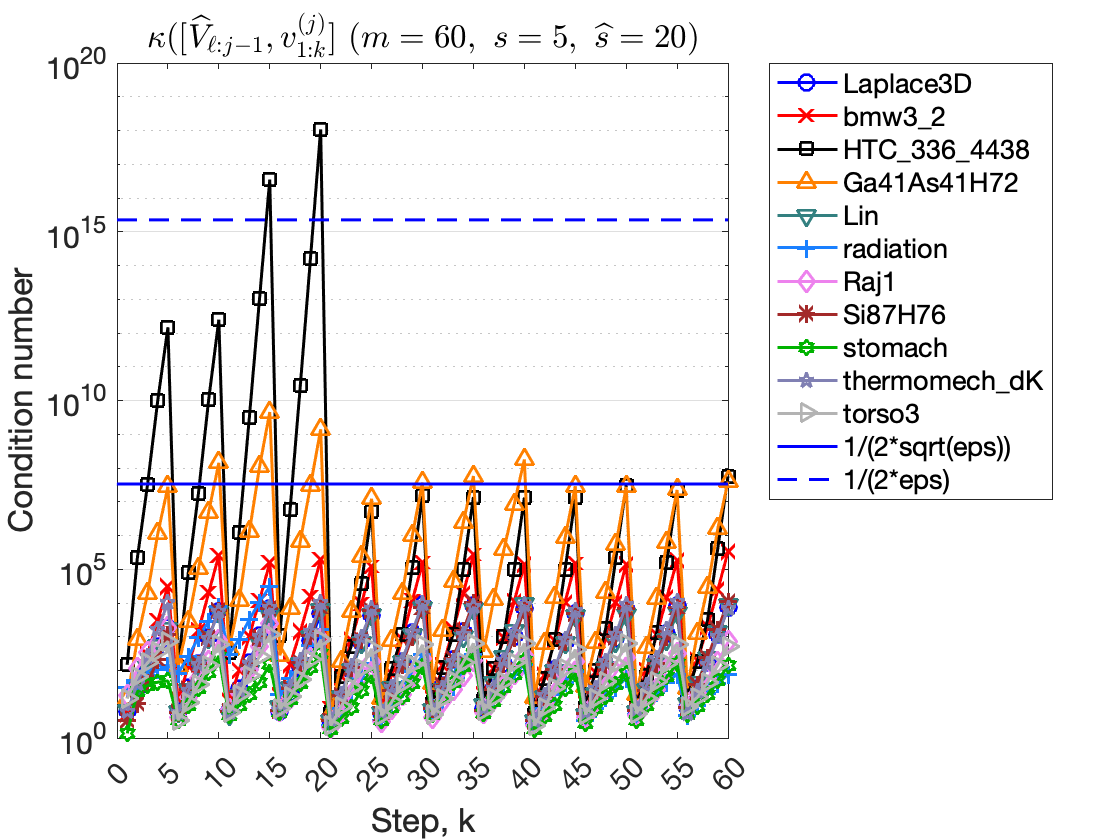}
    }
    \caption{``incremental'' RandCholQR pre-processing.}
  \end{subfigure}
 \caption{Condition number with Krylov vectors generated by MPK (marker at every step).} \label{fig:cond-mpk}
 \end{figure}

 \begin{figure}[t]
 \centerline{
   \begin{subfigure}[b]{.55\linewidth}
   \centerline{
     \includegraphics[width=\linewidth]{./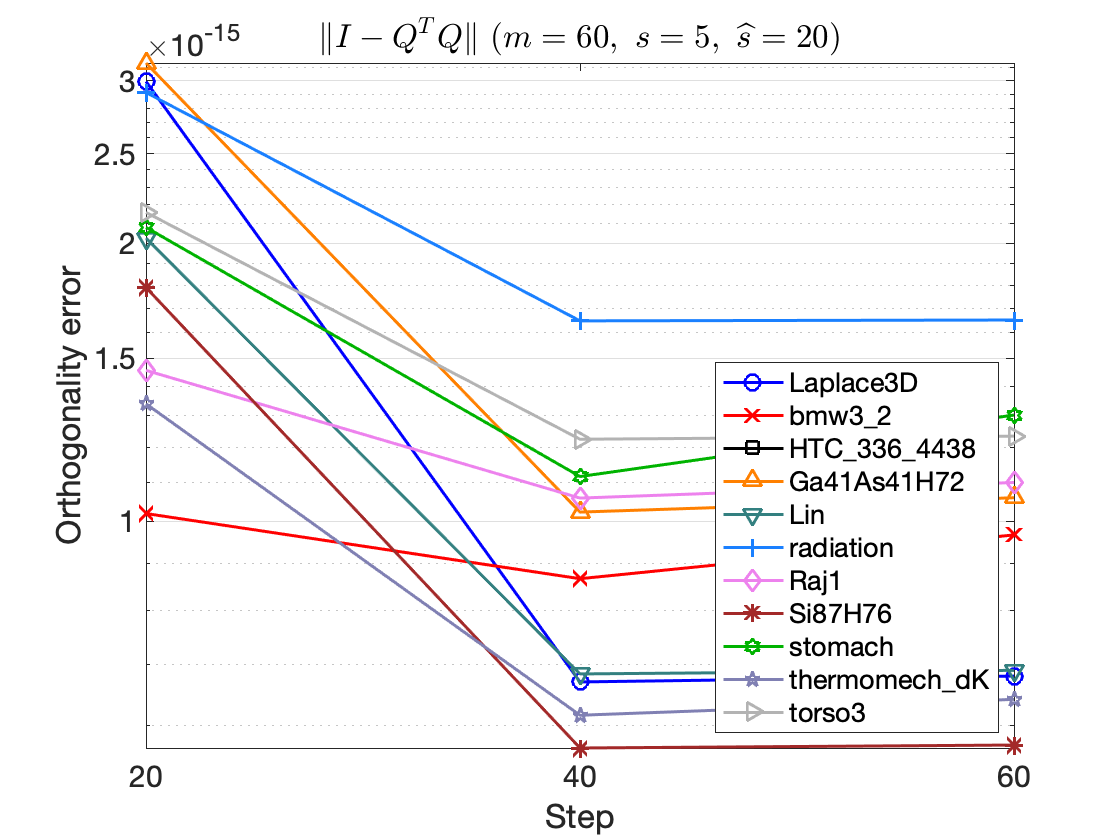}
    }
    \caption{BCGS-PIP pre-processing.}
  \end{subfigure}
  \begin{subfigure}[b]{.55\linewidth}
   \centerline{
     \includegraphics[width=\linewidth]{./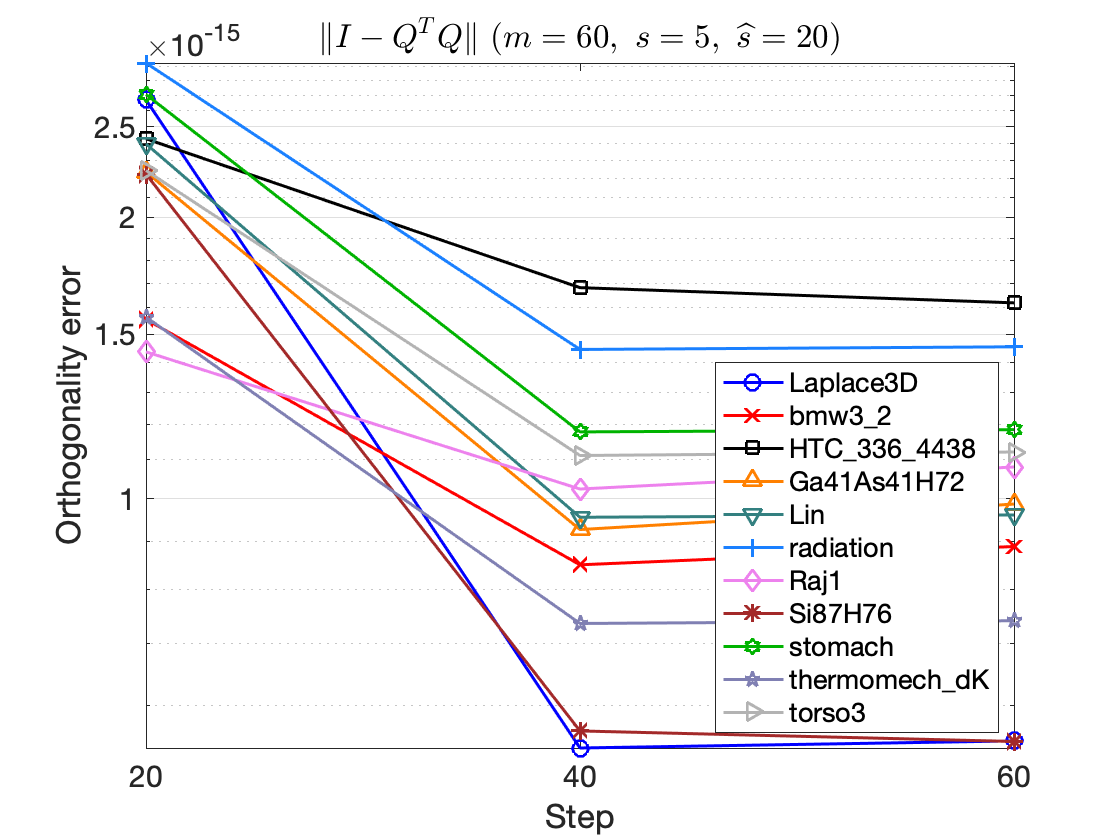}
    }
    \caption{RandChlQR pre-processing.}
  \end{subfigure}
  }
 \caption{Orthogonality error with two-stage algorithm (marker at every $\widehat{s}$ steps).} \label{fig:ortho-mpk}
 \end{figure}

Finally, Figure~\ref{fig:cond-mpk} shows the condition number of the basis vectors that are generated by MPK combined with the two-stage block orthogonalization scheme. 
Unlike the pre-generated panels of the synthetic matrix in Figure~\ref{fig:error-2stage},
the $s$-step basis vectors of the big panel $V_{\ell:t}$ are now generated by MPK, being interleaved with the pre-processing based on BCGS-PIP or ``incremental'' RandCholQR.
As a result, unlike what we have observed in Figure~\ref{fig:cond-2stage},
the accumulated condition number of $[Q_{1:\ell-1}, V_{\ell:j}]$ in Figure~\ref{fig:cond-mpk} did not increase significantly as more panels were appended (at every $s$ step).
The condition number of the basis vectors are managed likely because the basis vectors are pre-processed before MPK generates the next set of the $s$-step basis vectors $V_j$ such that the starting vector $v^{(j)}_{1}$ is roughly orthogonal to the space spanned by the previous panels $\underline{V}_{\ell:j-1}$ (either in $\ell_2$ or sketched space).
Without pre-processing the basis vectors, the condition number will continue to increase beyond the $s$th step, preventing us from using a large step size.
We also note that the condition number for {\it HTC\_336\_4438} became too large and did not satisfy the condition~\eqref{eq:assumption-3} required for BCGS-PIP preprocessing and the two-stage approach failed. On the other hand, RandCholQR pre-processing managed to keep the condition number smaller than the required condition~\eqref{eq:assumption-4} and generated the basis vectors $Q$ that have $\mathcal{O}(\epsilon)$ orthogonality errors, as shown in Figure~\ref{fig:ortho-mpk}.
}

\section{Implementation}
\label{sec:imple}
To study the performance of the block orthogonalization algorithms for $s$-step GMRES running on a GPU cluster,
we have implemented these algorithms within the Trilinos software framework \cite{Trilinos:2005,trilinos-website}.
Trilinos is a collection of open-source software libraries, called packages, 
for solving linear, non-linear, optimization, and uncertainty quantification problems.
It is intended to be used as building blocks for developing large-scale scientific or engineering applications.
Hence, any improvement in the solver performance could have direct impacts to the application performance.
In addition, Trilinos software stack provides portable performance of the solver on different hardware architectures,
with a single code base.
In particular, our implementation is based on Tpetra~\cite{tpetra,tpetra-website}
for distributed matrix and vector operations and Kokkos-Kernels~\cite{KK}
for the on-node portable matrix and vector operations
(which also provides the interfaces for the vendor-optimized
 kernels like NVIDIA cuBLAS, cuSparse, and cuSolver).

On a GPU cluster, our GMRES implementation uses GPUs to generate the orthonormal basis vectors.
The coefficient matrix $A$ and Krylov vectors $V$ are distributed among MPI processes in 1D block row format
(e.g., using a graph partitoner like ParMETIS), where $A$ is locally stored as the Compressed Sparse Row (CSR) format, while $V$ is stored in the column-major format. The operations with the small projected matrices, including solving a small least-squares problem, is redundantly done on CPU by each MPI process.

Our focus is on the block orthogonalization of the vectors,
which are distributed in 1D block row format among the MPI processes.
The orthogonalization process mainly consists of
dot-products, vector updates, and vector scaling 
(e.g., $R_{1:j-1,j} := Q_{1:j-1}^TV_j$ and $\widehat{V}_j := V_j - Q_{1:j-1}R_{1:j-1,j}$ of BCGS in Figure~\ref{algo:bcgs}, 
   and $\widehat{Q}_j := \widehat{V}_j R_{j,j}^{-1}$ of CholQR in Figure~\ref{algo:cholqr}, respectively). 
The dot-products $Q_{1:j-1}^TV_j$ requires a global reduce among all the MPI processes,
and the resulting matrix $R_{1:j-1, j}$ is stored redundantly on the CPU by all the MPI processes.
Given the upper-triangular matrix on each MPI process, the vectors can be updated and scaled locally without any additional 
communication.  All the local computations are performed by the computational kernels through Kokkos Kernels, either on a CPU or on a GPU.

We have implemented the random-sketching using standard linear algebra kernels
as discussed in Section~\ref{sec:randCholQR}. 
These distributed-memory or on-node kernels are readily available through Tpetra or Kokkos-Kernels, given that the random-sketching matrix $\boldsymbol{\Theta}$ is explicitly generated and stored in memory.
\begin{itemize}
\item For Gaussian sketch, the dense sketching matrix $\Theta$ is distributed among the MPI processes in the 1D block row format, and each MPI process stores the local matrix in the column major order.
\item For Count sketch, the sparse sketching matrix $\Theta$ is also distributed in the 1D block row format, where each local sparse matrix is stored in the CSR format.
\item For Count-Gaussian sketching, the sparse Count-sketching matrix $\Theta_c$ is distributed in the 1D block row format, but the dense Gaussian-sketching matrix $\Theta_g$ is duplicated on all the MPI processes such that it can be applied before performing the global-reduce of the final sketched basis vectors.
\end{itemize}

\section{Performance Experiments}
\label{sec:perf}

We conducted our performance tests on the Perlmutter supercomputer at National Energy Research Scientific Computing (NERSC) Center. Each compute node of Perlmutter
has one 64-core AMD EPYC 7763 CPUs and four NVIDIA A100 GPUs.
On each node, we launched 4 MPI processes (one MPI per GPU) and assigned 16 CPU cores to each MPI process.
For all the performance studies with one-stage or two-stage algorithm,
we used the sketch size of $2s$ or $2\widehat{s}$ for the Gaussian Sketch, 
while the sketch size of $2s^2$ or  $2\widehat{s}^2$ is used for the Count Sketch,
respectively.

The code was compiled using Cray's compiler wrapper 
with Cray LibSci version 23.2, CUDA version 11.5
and Cray MPICH version 8.1.
The GPU-aware MPI was not available on Perlmutter, and hence, all the MPI communications are performed through the CPU.
We configured Trilinos such that all the local dense and sparse matrix operations are performed using CUBLAS and CuSparse, respectively.

\subsection{Single-GPU Sketching Performance (Gaussian vs. Count)}

\begin{figure}[t]
   \begin{subfigure}[b]{\linewidth}
   \centerline{
     \includegraphics[width=.85\linewidth]{./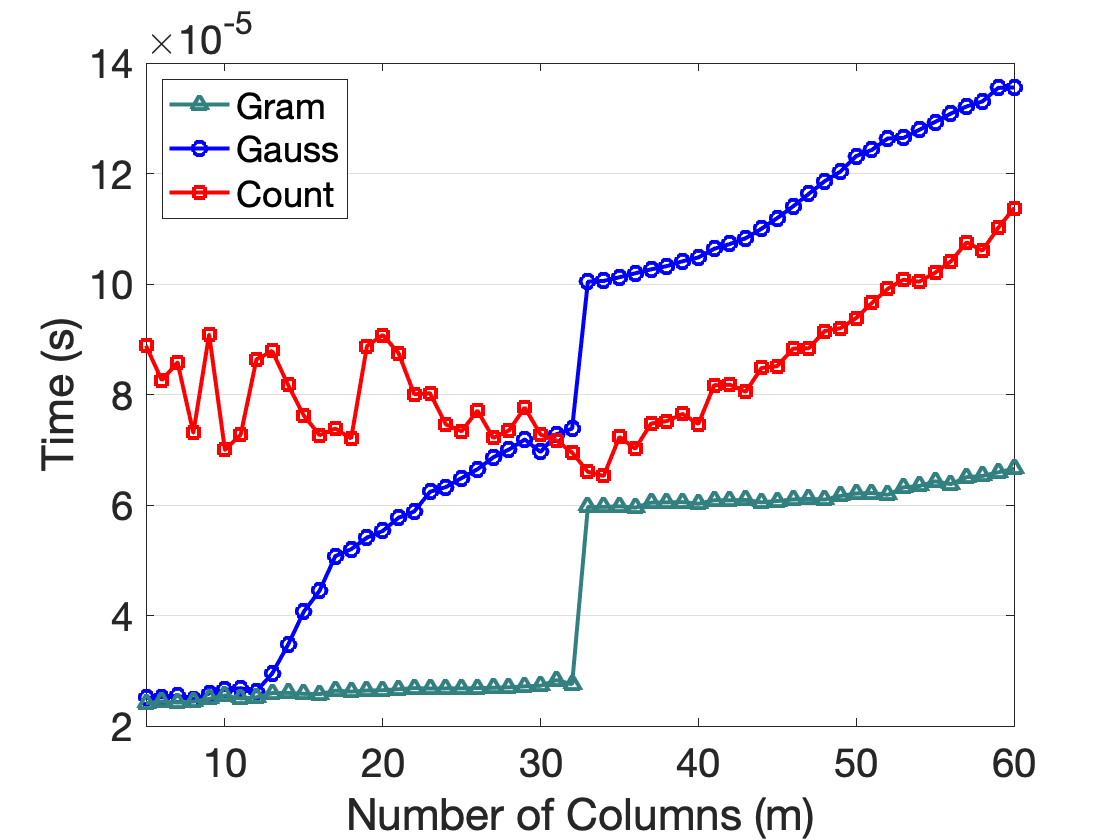}
   }
   \caption{Time in seconds.} \label{fig:sketch-time}
   \end{subfigure}   
   \begin{subfigure}[b]{\linewidth}
   \centerline{
     \includegraphics[width=.85\linewidth]{./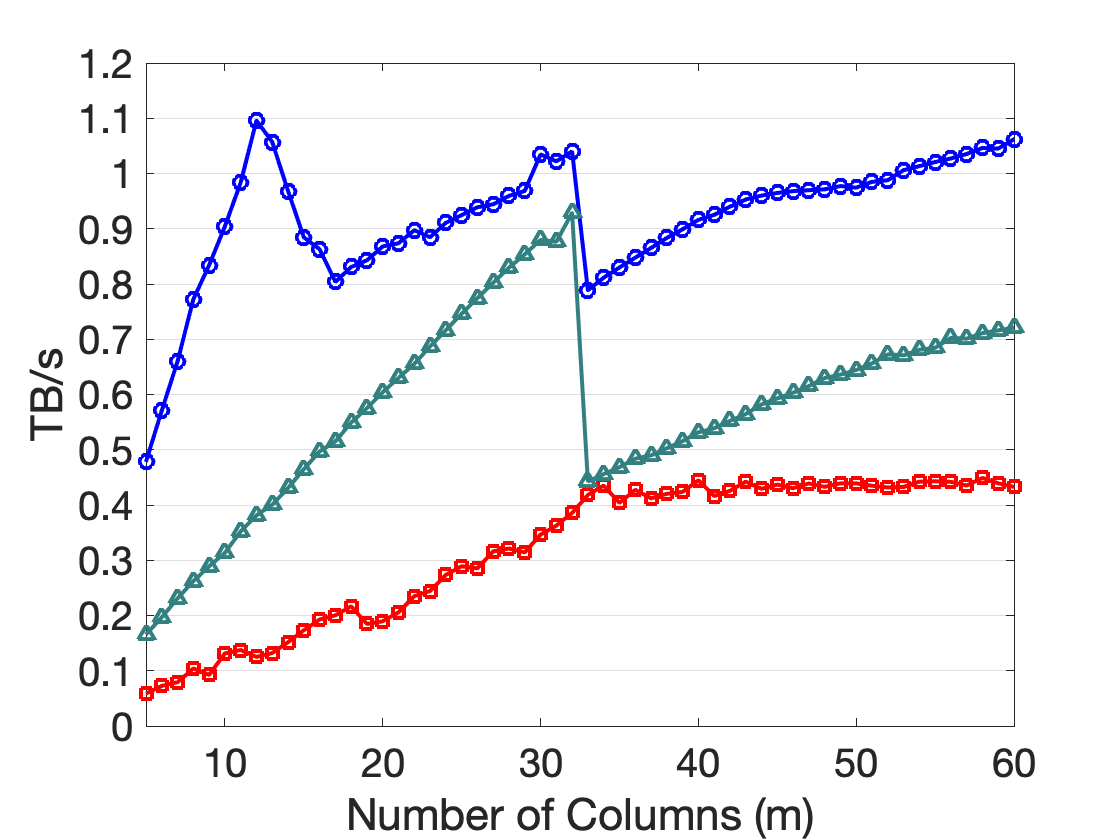}
   }
   \caption{Bandwidth.} \label{fig:sketch-bw}
   \end{subfigure}   
\caption{Sketch performance on one NVIDIA A100 GPU.}
\end{figure}

\begin{figure}[h]
   \begin{subfigure}[b]{\linewidth}
   \centerline{
     \includegraphics[width=.85\linewidth]{./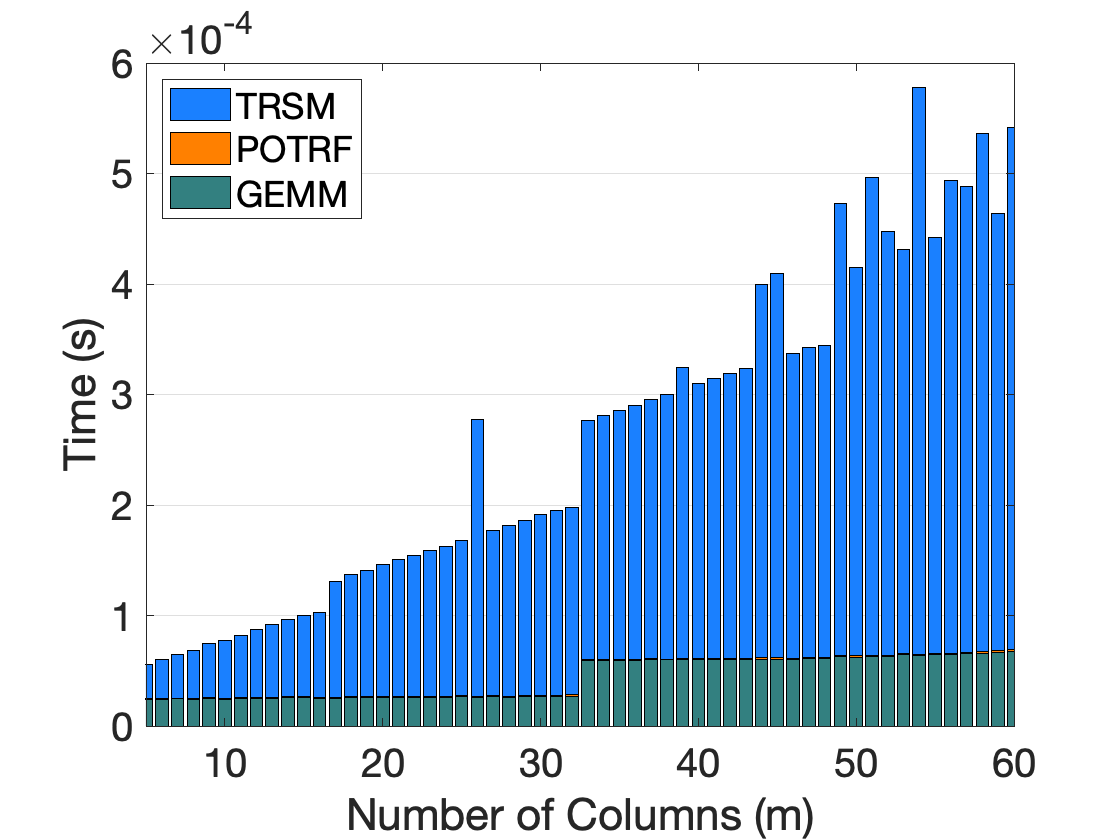}
   }
   \caption{Time in seconds.}
  \end{subfigure}
   \begin{subfigure}[b]{\linewidth}
   \centerline{
     \includegraphics[width=.85\linewidth]{./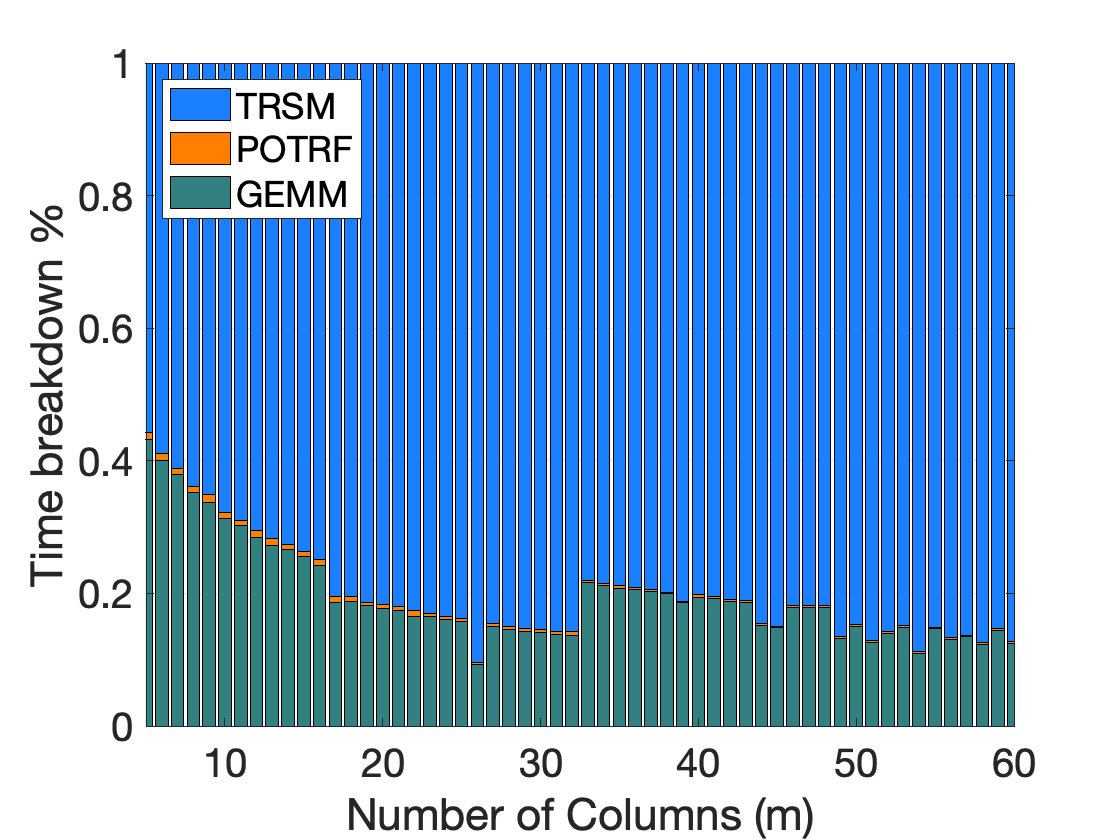}
   }
   \caption{Ratio.}
  \end{subfigure}
\caption{Time breakdown for CholQR on one NVIDIA A100 GPU.} \label{fig:cholqr-time}
\end{figure}

Figure~\ref{fig:sketch-time} compares the time required for Gaussian and Count sketch on a single NVIDIA A100 GPU, with the time required for computing the Gram matrix of the block vectors (e.g., needed for CholQR), 
where the number of rows is fixed at $n=10^5$, while the number of columns, $m$, is increased from 5 to 60. The Count Sketch was slower than the Gaussian Sketch for a small number of columns (e.g., $s=5$ for the one-stage block orthogonalization). However, because the Count Sketch has the computational complexity of $\mathcal{O}(nm)$ compared to $\mathcal{O}(nm^2)$ of Gaussian Sketch, the time required for the Count Sketch increased less with the increasing number of columns, and it became faster than the Gaussian Sketch for a large enough number of columns (e.g., $\widehat{s}=60$ for the two-stage orthogonalization, though we sketch $s$ columns at a time).

Figure~\ref{fig:sketch-bw} shows the observed bandwidth for the three algorithms with $4\mbox{ bytes}$ to store the column index for the sparse CSR format and $8\mbox{ bytes}$ to store the double-precision numerical value for both the dense vectors and sparse matrix, respectively. We let the amount of the data required to move to be $8\mbox{ bytes}\cdot mn$ for computing the Gram matrix (reading $V$ once), and $8\mbox{ bytes}\cdot(3mn)$ or $8\mbox{ bytes}\cdot mn + (8+4)\mbox{ bytes}\cdot m$ for the Gaussian Sketch or Count Sketch (reading $V$ and dense or sparse $\Theta$ once), respectively. As we expected, due to the irregular data access, the Count Sketch obtained the lower bandwidth than the Gaussian Sketch (which obtained close to the NVIDIA A100 memory bandwidth, 1.5TBytes per second). The bandwidth obtained for computing the Gram matrix was about the half of that observed for the Gaussian Sketch. This could be because Tpetra uses non-symmetric dense matrix-matrix multiply (GEMM) for computing the Gram matrix, potentially doubling the amount of the required data traffic.

\subsection{Breakdown of Orthogonalization Time on one and multiple GPUs (Gaussian, Count, vs. CountGauss)}

We now study the performance of the block orthogonalization kernels. Figure~\ref{fig:cholqr-time} shows the breakdown of time needed for CholQR on a single NVIDIA A100 GPU. With our experiment setups, the dense triangular solves (TRSM) required for generating the orthogonal basis vectors took longer than the dense matrix-matrix vector multiply (GEMM) needed to generate the Gram matrix, even though their computational complexity costs are about the same. The time needed to compute the Cholesky factorization of the Gram matrix on the CPU was negligible.

\begin{figure}[t]
   \begin{subfigure}[b]{\linewidth}
   \centerline{
     \includegraphics[width=.85\linewidth]{./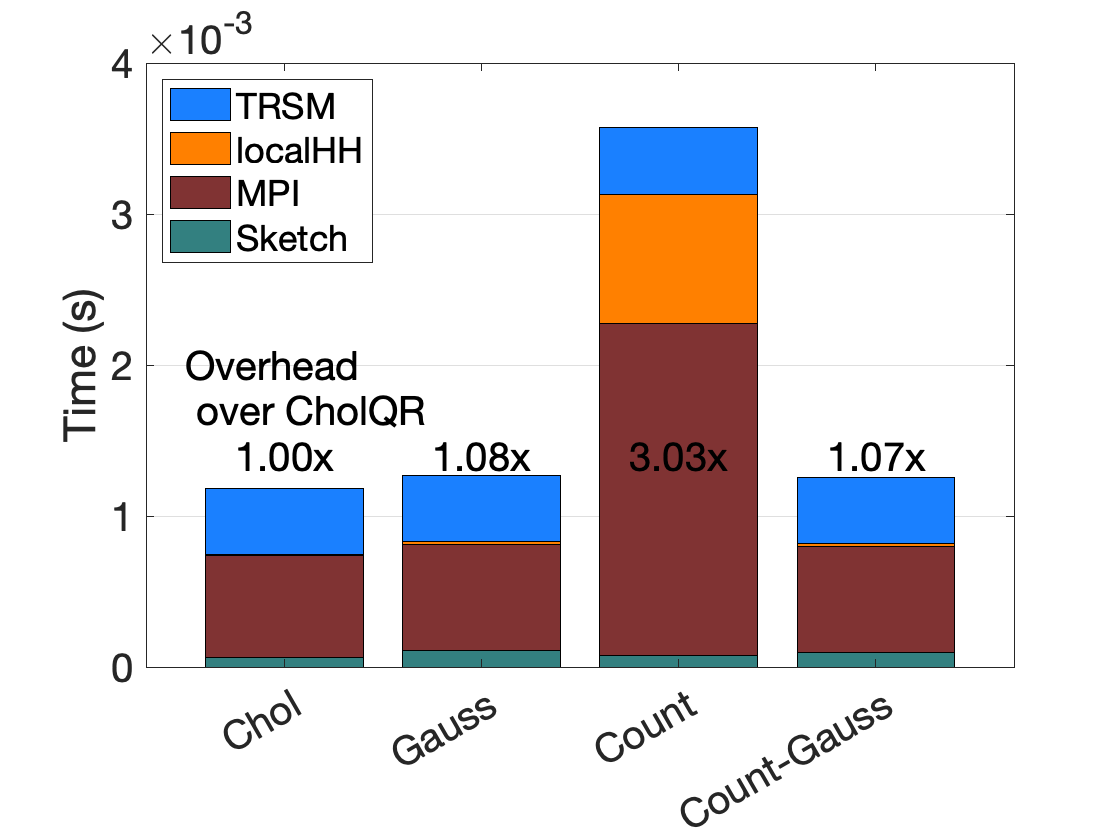}
   }
   \caption{Breakdown of Intra-block orthogonalization time ($m=40$).} \label{fig:distributed-intra-time}
  \end{subfigure}   
   \begin{subfigure}[b]{\linewidth}
   \centerline{
     \includegraphics[width=.85\linewidth]{./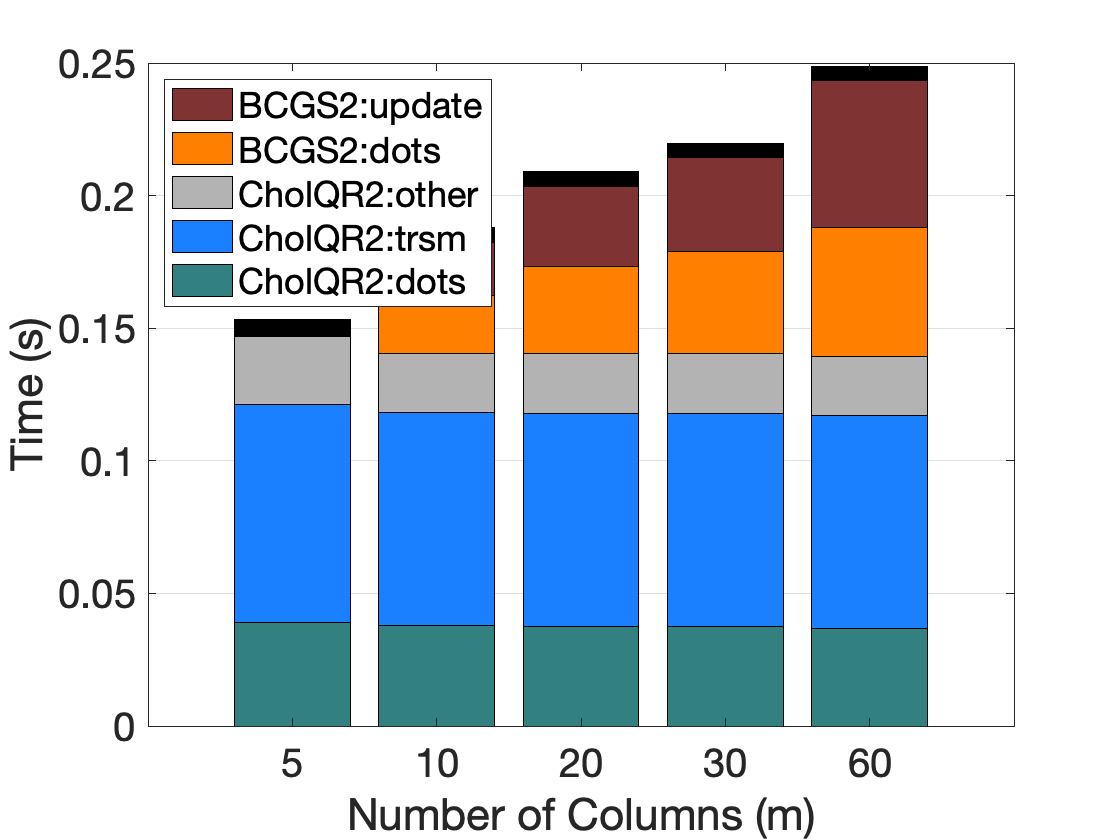}
   }
   \caption{BCGS2 with CholQR2.} \label{fig:distributed-bcgs-time}
  \end{subfigure}
\caption{Breakdown of orthogonalization time on four NVIDIA A100 GPUs. }
\end{figure}

\begin{table*}
\scriptsize
 \centerline{
 \begin{tabular}{c|rrr|rrr|rrr|rrr|rrr}
  & \multicolumn{3}{c}{GMRES + ICGS (1659)} & \multicolumn{3}{|c}{$s$-step + CholQR2 (1660)} & \multicolumn{3}{|c}{$s$-step + RandQR (1660)} & \multicolumn{3}{|c}{Two-stage + PIP (1700)}  & \multicolumn{3}{|c}{Two-stage + RandBCGS2 (1700)}\\\
  \# nodes & SpMV & Ortho & Total & SpMV & Ortho     & Total       & SpMV & Ortho     & Total        & SpMV & Ortho & Total & SpMV & Ortho & Total \\
  \hline\hline
  1        & 7.96 & 35.00 & 40.81 & 7.38 & 9.14+8.42 & 23.95       & 7.42 & 9.11+8.55 & 24.08        & 8.14 & 9.87  & 16.92& 9.10 & 14.69 & 21.58\\ 
           &      &       &       &   & 1.99$\times$ & 1.70$\times$&   & 1.98$\times$ & 1.69$\times$ & & 3.55$\times$ & 2.41$\times$ & & 2.38$\times$ & 1.89$\times$\\
  \hline
  2        & 6.26 & 22.34 & 26.47 & 6.28 & 5.97+4.96 & 15.65   & 6.30 & 6.05+5.04     & 15.75        & 6.48 & 5.41  & 10.91 & 7.09 & 7.85  & 13.28\\
           &      &       &       &   & 2.04$\times$ & 1.69$\times$&  & 2.01$\times$  & 1.68$\times$ &&  4.13$\times$ & 2.43$\times$ && 2.85$\times$ & 1.99$\times$\\
  \hline
  4        & 5.13 & 16.56 & 19.85 & 5.08 & 4.10+3.54 & 11.45   & 5.06 & 4.14+3.66     & 11.54        & 5.24 & 3.59  & 7.87  & 5.74 & 4.92  & 9.25\\
           &      &       &       &   & 2.17$\times$ & 1.73$\times$& & 2.12$\times$   & 1.72$\times$ && 4.61$\times$ & 2.52$\times$ && 3.37$\times$ & 2.15$\times$\\
  \hline
  8        & 4.47 & 14.46 & 17.31 & 4.47 & 3.17+2.88 & 9.43    & 4.43 & 3.13+2.99     & 9.44         & 4.44 & 2.54  & 6.21  & 4.66 & 3.09  & 6.94\\
           &      &       &       &   & 2.39$\times$ & 1.84$\times$& & 2.36$\times$   & 1.93$\times$ && 5.69$\times$ & 2.79$\times$ && 4.68$\times$ & 2.49$\times$\\
  \hline
  16       & 4.41 & 13.40 & 15.78 & 4.06 & 2.69+2.43 & 8.24    & 4.06 & 2.70+2.51     & 8.32         & 4.15 & 2.24  & 5.50  & 4.26 & 2.66  & 5.89\\
           &      &       &       &   & 2.62$\times$ & 1.92$\times$& & 2.57$\times$   & 1.90$\times$ && 5.98$\times$ & 2.87$\times$ && 5.04$\times$ & 2.68$\times$\\
 \end{tabular}
 }
  \caption{
           Parallel Strong Scaling of time-to-solution with 7-points 3D Laplace, $n = 300^3$. On each node, we launched 4 MPI processes (one MPI per GPU).
           The table also shows the speedup gained
           using $s$-step ($s=5$) and two-stage ($\widehat{s}=m$) over standard GMRES ($m=100$) for orthogonalization and total solution time.} \label{tab:time-to-sol}
 \end{table*}

Figure~\ref{fig:distributed-intra-time} shows the breakdown of the intra-block orthogonalization time on the four NVIDIA A100 GPUs available on a single Perlmutter compute node.
Compared to the Gaussian Sketch, the ``sketch time" (i.e., the time to apply the sketch) of Count-Sketch was slightly faster, but due to its larger sketch size, it required more time for the global-reduce and local Householder QR. Overall, Count-Gaussian sketch obtained the best performance, but the performance on Perlmutter was largely dominated by the global-reduce and TRSM (and not by the sketching time), and Gaussian and Count-Gaussian sketch obtained similar performance.

Figure~\ref{fig:distributed-bcgs-time} then shows the breakdown of the BCGS, with CholQR2, orthogonalization time on the four NVIDIA A100 GPUs. It shows the average time required by the $s$-step GMRES over 600 iterations to orthogonalize the basis vectors with $s=5$ for solving the 2D Laplace problem of dimension $700^2$, and hence the number of columns refers to the GMRES' restart cycle. The inter-block orthogonalization became more significant as the restart-cycle length was increased. However, the TRSM time was still the most dominant part of the overall orthogonalization time.

\subsection{$s$-step GMRES Strong-scaling Results}

Although the breakdown of the iteration time depends on the matrix,
in Table~\ref{tab:time-to-sol}, we show the parallel strong-scaling performance of the $s$-step GMRES for solving a 3D Laplace problem,
from which we can infer the performance for other problems (please see, for instance \cite{Yamazaki:2024}, for the performance using difference matrices).
We used the restart length of 100 (i.e., $m=100$), and
considered GMRES to have converged when the relative residual
norm is reduced by six orders of magnitude.

For the one-stage algorithm, compared to CholQR, the random sketching had virtually no overhead, while improving the stability of the orthogonalization as shown in Section~\ref{sec:numeric}.
Compared to the one-stage algorithm, the random sketching had more significant overhead for the two-stage orthogonalization due to the larger sketch size. However, the overhead became less significant as we increased the number of MPI processes, and the latency cost became more significant, with 1.49, 1.45, 1.37, 1.22, and 1.19$\times$ overhead on 1, 2, 4, 8, and 16 nodes, respectively.

 

\section{Conclusion}

We integrated random sketching techniques into the block orthogonalization process, required for the $s$-step GMRES.
The resulting algorithm ensures that the overall orthogonalization errors are bounded by the machine precision as long as each of the block vectors are numerically full-rank.
Our performance results demonstrated that
the numerical stability of the block orthogonalization process is improved with a relatively small performance overhead.
Our implementation of the random sketching utilizes standard linear algebra kernels such that it is portable to different computer architectures.
Though the vendor-optimized versions of these kernels are often available, they may not be optimized for the specific shapes or sparsity patterns of the sketching matrices, and we have observed the sparse sketch often obtains the suboptimal performance.
Nevertheless,
the sparse random sketching has the complexity of $\mathcal{O}(n)$, and with a careful implementation, RandCholQR, using a spares sketch, might not only enhance the numerical stability but also be able to outperform CholQR2.


\section*{Appendix}
\label{sec:appendix}
Here we provide a sketch of the proof of Proposition~\ref{prop72}.

\begin{proof}
Because of Proposition \ref{prop_2stage}, 
it is sufficient to prove that the condition number of the big panel $\overline{Q}_{\ell+1:t+1}$,
before the CholQR (on Line 7 of Figure \ref{algo:two-stage-flat}), is bounded as
\begin{equation}\label{ap:Qhatcond}
  \kappa\left( \overline{Q}_{\ell+1:t+1}\right) = O(1).
\end{equation}
Then with the bound~\eqref{eq:cholqr}, after the CholQR, the resulting big panel has the orthogonality error,
$$
\|I - Q_{\ell+1:t+1}^TQ_{\ell+1:t+1}\| = O(\epsilon),
$$
and by an argument identical to~\cite[Theorem 6.1]{Barlow:2024}, the overall stability of the two-stage BCGS2 is ensured with the $\mathcal{O}(\epsilon)$ orthogonality error among all the generated basis vectors,
$$
\|I - Q_{1:t+1}^TQ_{1:t+1}\| = O(\epsilon),
$$
up to a constant term $c(\epsilon,n,\widehat{s})$.

The general strategy for proving \eqref{ap:Qhatcond} is based on the observation that RandBCGS2 is identical to RandHH except that BCGS2-HH (Line 5 of Figure \ref{algo:incRand2}) is used to orthogonalize the sketched vectors $\widehat{\mathbf{v}}_{1:t+1}$ , instead of HH (Line 4 of Figure \ref{algo:randQR_nochol}).
For instance, the Lines 8--11 of Figure \ref{algo:incRand2} are equivalent to applying the forward-substitution to the big panel $\overline{Q}_{1:t+1} = \widehat{V}_{1:t+1}R_{1:t+1,1:t+1}^{-1}$. 
Hence, if we show that the backward error incurred during BCGS2-HH differs from HH by a constant factor $c(\epsilon,n,\widehat{s})$, then we can use the error analysis of RandHH from \cite{Higgins:2025} to prove \eqref{ap:Qhatcond}.

More specifically, in \cite[Corollary 5.1]{Higgins:2025}, it was proven that RandHH results in $\kappa\left( \overline{Q}_{\ell+1:t+1}\right) = O(1)$. This proof relies on \cite[Sections 5.2.1--5.2.8]{Higgins:2025}. Now, the error analysis in the first three subsections \cite[Sections 5.2.1--5.2.3]{Higgins:2025} applies to RandBCGS2 because they do not depend on the results of the QR factorization. Furthermore, the analysis of the backward error with the QR factorization in \cite[Section 5.2.4]{Higgins:2025} only differs by a constant factor if we prove the backward error of the QR factorization via BCGS2-HH only differs from the Householder QR backward error by a constant factor. Hence, the remaining error analysis in \cite[Sections 5.2.5--5.2.8]{Higgins:2025} applies to RandBCGS2, which uses BCGS2-HH in place of Householder QR, up to a constant factor. Therefore, by \cite[Corollary 5.1]{Higgins:2025}, we have $\kappa\left( \overline{Q}_{\ell+1:t+1}\right) = O(1)$ with RandBCGS2, thereby completing the proof. To simplify the notations, for the rest of the proof, we drop the accents on top of the computed sketched vectors (e.g., $\mathbf{v}$ instead of $\widehat{\mathbf{v}}$)

The main component of the proof is to derive the bound
$$
  \|\mathbf{v}_{1:j} - \mathbf{\bar{q}}_{1:j}R_{1:j,1:j}\|_2 = O(\epsilon) \|\mathbf{v}_{1:j}\|_2,
$$ 
where $\mathbf{\bar{q}}_{1:j}$ is an \emph{exactly} orthogonal matrix and $R_{1:j,1:j}$ is the upper-triangular matrix computed by BCGS2-HH, since such bound was proved for HH in \cite[Section 5.2.4]{Higgins:2025}. Since such result is not directly available in \cite{Barlow:2024}, we show how to obtain such a bound below.

According to \cite[Theorem 6.1]{Barlow:2024}, 
we first note that our computed matrices $\mathbf{q}_{1:j}$ and $R_{1:j,1:j}$ satisfy
\begin{align}
    \mathbf{v}_{1:j} + \Delta \mathbf{v}_{1:j} &= \mathbf{q}_{1:j}R_{1:j,1:j}, \label{eq:barlow1} \\
    \|\Delta \mathbf{v}_{1:j}\|_2 & = c_5(\epsilon,n,\widehat{s}) \|\mathbf{v}_{1:j}\|_2, \label{eq:barlow3} \\
    \|I - \mathbf{q}_{1:j}^T\mathbf{q}_{1:j}\|_2 &= c_6(\epsilon,n,\widehat{s}),\label{eq:barlow2}
\end{align}
for some constants $c_5(\epsilon,n,\widehat{s})$ and $ c_6(\epsilon,n,\widehat{s})$. 

Given the singular value decomposition of $\mathbf{q}_{1:j} = U\Sigma V^T$,
its perturbation $\Delta \mathbf{q}_{1:j}$ from the exactly-orthonormal vectors $\bar{\mathbf{q}}_{1:j}$
(i.e., $\Delta \mathbf{q}_{1:j} = \bar{\mathbf{q}}_{1:j} - \mathbf{q}_{1:j}$) can be bounded as
\begin{eqnarray}
\nonumber
    \|\Delta \mathbf{q}_{1:j}\|_2 
        & = & \|\bar{\mathbf{q}}_{1:j} - \mathbf{q}_{1:j}\|_2\\
\nonumber
        & = & \|U(I - \Sigma)V^T\|_2 \\
        & = & \|I - \Sigma\|_2 \leq c_7(\epsilon,n,\widehat{s}).\label{eq:c7}
\end{eqnarray} 
where the singular values of the exactly-orthonormal vectors $\bar{\mathbf{q}}_{1:j}$ are all ones (i.e., $\bar{\mathbf{q}}_{1:j} = U V^T$).
Using Weyl's inequality, \eqref{eq:barlow2} implies that for each $i \in \{1, \dots, j\}$, 
\begin{equation}
    \sqrt{1 - c_6(\epsilon,n,\widehat{s})} \leq \Sigma_{i,i} \leq \sqrt{1 + c_6(\epsilon,n,\widehat{s})}. \label{eq:svqBounds}
\end{equation}
Hence, the constant $c_7(\epsilon,n,\widehat{s})$ is given by
$$
  c_7(\epsilon,n,\widehat{s}) = \max\{ \sqrt{1 + c_6(\epsilon,n,\widehat{s})} - 1, 1 - \sqrt{1 - c_6(\epsilon,n,\widehat{s})} \} = O(\epsilon).
$$

Now, by substituting $\mathbf{q}_{1:j} = \bar{\mathbf{q}}_{1:j} - \Delta \mathbf{q}_{1:j}$ into \eqref{eq:barlow1},
we obtain
$$
  \bar{\mathbf{q}}_{1:j}R_{1:j,1:j} = \mathbf{v}_{1:j} + \Delta \mathbf{v}_{1:j} + \Delta \mathbf{q}_{1:j} R_{1:j,1:j},
$$ 
which allows us to derive the bound,
\begin{equation*}
    \|R_{1:j,1:j}\|_2 \leq \frac{1 + c_5(\epsilon,n,\widehat{s})}{1 - c_7(\epsilon,n,\widehat{s})} \| \mathbf{v}_{1:j}\|_2,
\end{equation*}
because
\begin{eqnarray}
\nonumber
    &&\hspace{-1cm}(1 - c_7(\epsilon,n,\widehat{s})) \|R_{1:j,1:j}\|_2\\
\nonumber
    &&\hspace{-1cm}\; = (1 - c_7(\epsilon,n,\widehat{s}))\|\bar{\mathbf{q}}_{1:j}R_{1:j,1:j}\|_2 \\
\nonumber
    &&\hspace{-1cm}\; = \| \mathbf{v}_{1:j} + \Delta \mathbf{v}_{1:j} + \Delta \mathbf{q}_{1:j} R_{1:j,1:j} \|_2 - c_7(\epsilon,n,\widehat{s})\|R_{1:j,1:j}\|_2\\
\nonumber
    &&\hspace{-1cm}\; \leq \| \mathbf{v}_{1:j}\|_2  + \|\Delta \mathbf{v}_{1:j}\|_2 + (\|\Delta \mathbf{q}_{1:j} \|_2 - c_7(\epsilon,n,\widehat{s}))\|R_{1:j,1:j}\|_2\\
    &&\hspace{-1cm}\; \leq \| \mathbf{v}_{1:j}\|_2  + \|\Delta \mathbf{v}_{1:j}\|_2 \label{eq:l4}\\
    &&\hspace{-1cm}\;\leq \| \mathbf{v}_{1:j}\|_2 + c_5(\epsilon,n,\widehat{s}) \| \mathbf{v}_{1:j}\|_2\label{eq:l5}\\
\nonumber
    &&\hspace{-1cm}\; = (1 + c_5(\epsilon,n,\widehat{s})) \| \mathbf{v}_{1:j}\|_2.
\end{eqnarray}
where the inequalities~\eqref{eq:l4} and \eqref{eq:l5} follows due to $\|\Delta \mathbf{q}_{1:j} \|_2 \le c_7(\epsilon,n,\widehat{s})$
and $\|\Delta \mathbf{v}_{1:j}\|_2 = c_5(\epsilon,n,\widehat{s}) \|\mathbf{v}_{1:j}\|_2$ from \eqref{eq:c7} and \eqref{eq:barlow1}, respectively.

Thus, we finally have
\begin{align}
    & \| \Delta \mathbf{v}_{1:j} + \Delta \mathbf{q}_{1:j} R_{1:j,1:j} \|_2 \nonumber \\&\leq  \| \Delta \mathbf{v}_{1:j}\|_2 + \|\Delta \mathbf{q}_{1:j}\|_2\|R_{1:j,1:j} \|_2 \nonumber \\
    &\leq \left( c_5(\epsilon,n,\widehat{s}) + c_7(\epsilon,n,\widehat{s}) \frac{1 + c_5(\epsilon,n,\widehat{s}}{1 - c_7(\epsilon,n,\widehat{s})}\right) \|\mathbf{v}_{1:j}\|_2 \nonumber \\
    &= c_8(\epsilon,n,\widehat{s}) \|\mathbf{v}_{1:j}\|_2,
\end{align}
where $c_8(\epsilon,n,\widehat{s}) = O(\epsilon)$. 

Therefore, there is an exactly-orthogonal matrix $\bar{\mathbf{q}}_{1:j}$ such that the backward error $\Delta \bar{\mathbf{v}}_{1:j}$ from BCGS2-HH satisfies
\begin{eqnarray}
\nonumber
    &&\mathbf{v}_{1:j} + \Delta \bar{\mathbf{v}}_{1:j} = \bar{\mathbf{q}}_{1:j}R_{1:j,1:j},\\
    &&\mbox{ with } \| \Delta \bar{\mathbf{v}}_{1:j}\|_2 = c_8(\epsilon,n,\widehat{s}) \|\mathbf{v}_{1:j}\|_2. \label{eq:vDelBound}
\end{eqnarray}
In contrast, if we compute the Householder QR of $\mathbf{v}_{1:j}$, the standard backward error analysis \cite[Theorem 19.4]{HighamNumAlg} gives 
\begin{eqnarray}
\nonumber
  &&\mathbf{v}_{1:j} + \Delta \mathbf{\tilde{v}}_{1:j} = \mathbf{\tilde{q}}_{1:j}\tilde{R}_{1:j,1:j},\\
\nonumber
  &&\mbox{ with } \|\Delta \tilde{\mathbf{v}}_{1:j} \|_2 \leq c_9(\epsilon, n, \widehat{s}) \|\mathbf{v}_{1:j}\|_2,
\end{eqnarray}
where $\tilde{R}_{1:j,1:j}$ is the upper-triangular matrix computed by the HH factorization in finite precision, $\mathbf{\tilde{q}}_{1:j}$ is exactly-orthogonal, and $c_9(\epsilon,n,\widehat{s}) = O(\epsilon)$ is a constant. In other words, the backward error from BCGS2-HH and Householder QR are both bounded by some $O(\epsilon)\|\mathbf{v}_{1:j}\|_2$ terms, and therefore they only differ by a constant factor $c(\epsilon, n, \widehat{s})$.
%
%
\end{proof}

\section*{Acknowledgment}
This work was supported 
by the Exascale Computing Project (17-SC-20-SC), a collaborative effort of the U.S. Department of Energy Office of Science and the National Nuclear Security Administration, 
by the U.S. Department of Energy, Office of Science, Office of Advanced Scientific Computing Research, Scientific Discovery through Advanced Computing (SciDAC) Program through the FASTMath Institute under Contract No. DE-AC02-05CH11231 at Sandia National Laboratories,
and by Sandia Laboratory Directed Research and Development (LDRD).
This research used resources of the National Energy Research Scientific Computing Center (NERSC).
Sandia National Laboratories is a multimission laboratory managed and operated by National Technology and Engineering Solutions of Sandia, LLC, a wholly owned subsidiary of Honeywell International, Inc., for the U.S. Department of Energy's National Nuclear Security Administration under contract DE-NA-0003525. This paper describes objective technical results and analysis. Any subjective views or opinions that might be expressed in the paper do not necessarily represent the views of the U.S. Department of Energy or the United States Government.

\bibliography{ref}
\end{document}